\definecolor{sunglow}{rgb}{1.0, 0.8, 0.2}
\definecolor{aureolin}{rgb}{0.99, 0.93, 0.0}
\definecolor{canaryyellow}{rgb}{1.0, 0.94, 0.0}
\definecolor{fyellow}{rgb}{0.8, 1.0, 0.0}
\definecolor{electricy}{rgb}{1.0, 1.0, 0.0}
\definecolor{tyellow}{rgb}{0.93, 0.9, 0.0}
\theoremstyle{plain}
\newtheorem{thm}{Theorem}[section]
\newtheorem{cor}[thm]{Corollary}
\newtheorem{obs}[thm]{Observation}
\newtheorem{lem}[thm]{Lemma}
\newtheorem{prop}[thm]{Proposition}
\newtheorem{defn}[thm]{Definition}
\newtheorem{exa}[thm]{Example}
\newtheorem{ques}{Question}[section]
\DeclareMathOperator{\prox}{prox}
\title{Discrete-time treatment number}
\author{ N. E.~Clarke \\
\texttt{ \small Acadia Univ.}\\
\texttt{  \small NS Canada}
\and
  K. L.~Collins \\
\texttt{ \small Wesleyan Univ.}\\
\texttt{ \small CT, USA}
\and
  M.E. Messinger \\
\texttt{ \small Mt.Allison Univ.}\\
\texttt{ \small NB Canada}
\and
  A. N. Trenk\\
\texttt{ \small Wellesley Coll.}\\
\texttt{ \small MA, USA}
\and 
  A. Vetta\\
\texttt{ \small McGill Univ.}\\
\texttt{ \small QC Canada}
}
\date{\today}
\begin{document}

\maketitle

\begin{abstract} 
We introduce the discrete-time treatment number of a graph, in which each vertex is in exactly one of three  states at any given time-step: compromised, vulnerable, or treated. Our treatment number is distinct from other graph searching parameters that use only two states, such as the firefighter problem~\cite{FirefighterSurvey} or Bernshteyn and Lee's inspection number \cite{BL21}. Vertices represent individuals and edges exist between individuals with close connections. Each vertex starts out as compromised, and can become compromised again even after treatment. Our objective is to treat the entire population so that at the last time-step, no members are vulnerable or compromised, while minimizing the maximum number of treatments that occur at each time-step. This minimum is the treatment number, and it depends on the choice of a pre-determined length of time $r$ that a vertex can remain in a treated state and length of time $s$ that a vertex can remain in a vulnerable state without being treated again.

We denote the pathwidth of graph $H$ by  $pw(H)$ and  prove that the treatment number of $H$ is bounded above by $\lceil \frac{1+pw(H)}{r+s}\rceil$.  Furthermore, we show that this upper bound equals the best possible lower bound for a \emph{cautious} treatment plan, defined as one in which each vertex, after being treated for the first time, is treated again within every consecutive $r+s$ time-steps until its last treatment.
However, many graphs admit a plan that is not cautious. 
In addition to our results for any values of $r$ and $s$, we focus on the case where treatments protect vertices only for one time-step and vertices remain in a vulnerable state only for one time-step ($r,s=1$). In this case, we provide a useful tool for proving lower bounds, show that the treatment number of an $n\times n$ grid equals $\lceil\frac{1+n}{2}\rceil$, characterize graphs that require only one treatment per time-step, and prove that subdividing an edge of a graph can reduce the treatment number. It is known that there are trees with arbitrarily large pathwidth; surprisingly, we prove that for any tree $T$, there is a subdivision of $T$ that requires at most two treatments per time-step.

\end{abstract}

\medskip

\section{Introduction}\label{sec:intro}

Controlling a contaminating influence on a network is a race between treatment and the possibility of vertices being corrupted again. We model this race with a deterministic graph process called the {\it discrete-time treatment model}.  Initially, every vertex of a graph is considered to be {\it compromised}.  At each time-step, we choose a set of $k$ vertices to {\it treat}.  Informally, in its simplest form, the model uses the following state change rules: 
\begin{itemize}
\item If a \emph{treated} vertex has a compromised neighbor and is not re-treated, it degrades to a {\it vulnerable} state.\vspace{-0.08in}
\item If a vulnerable vertex is not re-treated, it degrades to the compromised state.
\end{itemize}  

We  study the discrete-time {\it treatment number} of a graph, which we define to be the minimum $k$ such that after some finite number of time-steps, no vulnerable or compromised vertices remain.  To visualize the process, we color vertices by state: {\it green} for treated, {\it yellow} for vulnerable, and {\it red} for compromised.  At each time-step we color $k$ vertices green (some of which may already be green) and this corresponds to treating them.   
A treatment protocol with $k=1$ is shown for the graph    $K_{1,3}$   in Figure~\ref{fig:K13}. During time-step $1$, vertex  $a_1$  is colored green; and during time-step $2$, vertex $x$  is colored green.  In time-step $3$, vertex $a_2$ is colored green, but $x$ turns yellow since it has a red neighbor.  During time-step $4$,  vertex $x$  is colored green; and in time-step $5$  vertex $a_3$  is colored green.  By coloring one vertex at each time-step, it is possible to achieve a situation where  no vulnerable or compromised vertices remain in  $K_{1,3}$.

\begin{figure}
\[ \includegraphics[width=0.85\textwidth]{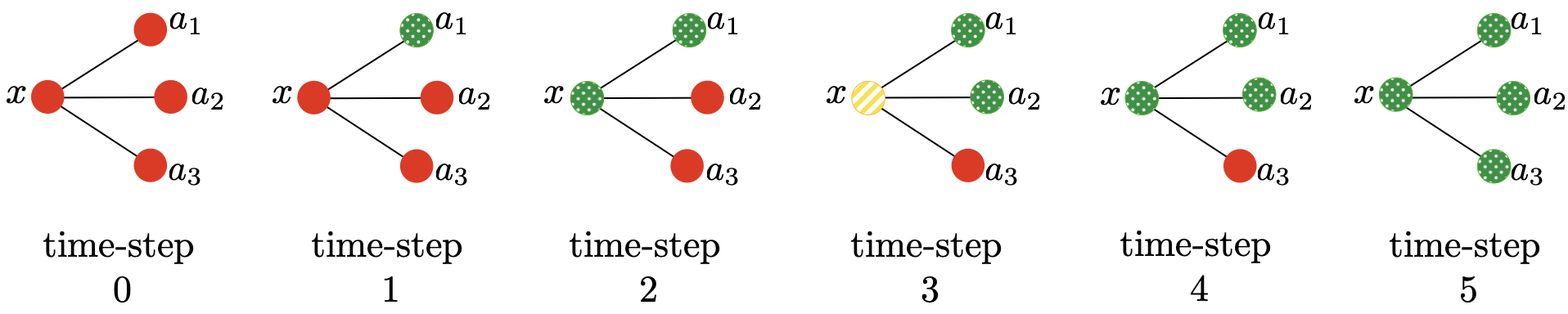} \]

\caption{A visualization of a $(1,1)$-protocol for $K_{1,3} $,  
with green vertices  also indicated by dots and the yellow vertex with diagonal lines. }
\label{fig:K13}
\end{figure}

\smallskip

Our extended time
model, defined in Section~\ref{sec:model}, includes pre-determined lengths of the 
post-treatment protective and vulnerable periods.  This means that when  a vertex is colored green, it remains green for a protective period of $r$ time-steps; and once a vertex turns yellow, it remains yellow in the vulnerable state for $s$ time-steps unless treated.  In the example for $K_{1,3}$ described above, the protective period is one time-step ($r=1)$: a vertex is protected only during the time-step in which it is colored green; the vulnerable period is also one time-step ($s=1$).

Our treatment model   
can be interpreted in many ways, and we give three examples. 
Consider a classroom of children where each child is in one of three states: engaged (green), losing focus (yellow), or distracted (red).  At the start of class, each child is distracted and the teacher's aim is for all children to be engaged.  At each time-step, the teacher directly engages with a subset of $k$ children, turning them green.  The model considers distraction as a corrupting social influence and in the simplest form uses the following state change rules.
If an engaged child has a distracted neighbor and is not directly re-engaged by the teacher, then the child will begin losing focus.
Unless the teacher directly engages with a child who is losing focus, the child will become distracted.
Our goal is to find minimum $k$ for which there is a teacher strategy resulting in all children engaged. 

Alternatively, the vertices of a graph could represent individuals in a population of spies, the edges represent close relationships between individual spies, and the  
three possible states of a vertex are: 
(i) \emph{loyal} (green), (ii) \emph{considering becoming a double agent} (yellow), and (iii) \emph{co-opted by the other side} (red). In this scenario, the treatments are 
meetings with a spymaster to remind individuals  
of their loyalties and perhaps
offer additional rewards. 
After a meeting with a spymaster, an individual is protected from  a change of loyalty for $r$ time intervals, and after being exposed to a double agent, an individual will not turn into a double agent for at least $s$  time intervals.  

A third interpretation of our model is using medical treatments to counteract the spread of an infection in a network. 
The three vertex states in the treatment model correspond to the compartments in Susceptible-Exposed-Infectious-Susceptible (SEIS) epidemic models \cite{BCF2019} where vertices can be (i) \emph{susceptible} (green), (ii) \emph{exposed} (yellow), and (iii) \emph{infectious} (red). 
In this interpretation, individuals are exposed to their close contacts at regular time-steps. Each treatment prevents infection for $r$ time-steps and there are $s$ time-steps before an infected individual becomes contagious. Our discrete-time model differs from the typical SEIS models which assume the number of infections change continuously.

Our discrete-time treatment model is related to, but not the same as, some graph searching problems that also involve dynamically changing vertex states over time.   We briefly discuss three of them: the firefighter problem, node searching, and the inspection game problem.  In the {\it firefighter problem} \cite{FirefighterSurvey}, vertices start out uncolored and  may be colored one of two colors: green (protected) or red (burned).    In  contrast to our  problem, once a vertex is colored in the firefighter problem, it never changes color.  
In node searching \cite{KP}, at each time-step, searchers choose a set of vertices to occupy, just as we choose a set of vertices  to treat.  However, in contrast to our model, in node searching, every {\it edge} of the graph is initially contaminated (i.e., may contain an intruder), and an edge is cleared only when both its endpoints are simultaneously occupied by searchers.  If there is a path from  a cleared edge to a contaminated one that does not contain a searcher, then  the cleared edge is immediately recontaminated.
 For an introduction to node searching, firefighting,  and other pursuit-evasion games, see~\cite{B22}. 

In the inspection game problem \cite{BL21},   searchers inspect a set of $k$ vertices at each time-step to locate an invisible intruder, and the intruder can remain in place or move to a neighboring vertex at each time-step.  The searchers win if there is a time-step in which a searcher occupies the same vertex as the intruder and the inspection number is the minimum $k$ needed to guarantee that the searchers have a winning strategy.  Like our model, the inspection game can be formulated in terms of compromised vertices, that is, during each round, the searchers clear a set of $k$ vertices of the intruder, and then every cleared vertex that has a compromised neighbor becomes compromised again. Our model differs in that there is an intermediate vulnerable state that, to our knowledge, does not appear in other graph searching models. Bernshteyn and Lee prove in \cite{BL21}  that the inspection number of a graph is bounded above by one more than the pathwidth of the graph. Interestingly, we show that the treatment number is bounded above by a smaller function of the pathwidth of a graph, and the size of the function depends on the length of the protective and vulnerable periods. 

\smallskip 

The rest of the paper is organized as follows.
In Section~\ref{sec:model}, we formally introduce the \emph{discrete-time treatment model,} and we define the treatment number of a graph. We provide our fundamental definitions, an example, and preliminary results. 
In Section~\ref{sec:results}, we use pathwidth as a tool to bound the treatment number and study particular types of protocols: minimal, monotone, and cautious. We use the pathwidth to achieve an upper bound for the treatment number and this bound agrees with the best possible lower bound using a cautious protocol. 
Sections~\ref{sec:r=s=1} and~\ref{sec:subdivisions} consider the more restrictive model where the protective period and vulnerable periods each have a length of one time-step.  In Section~4, we characterize graphs with treatment number one, and provide an important tool for proving lower bounds for the treatment number of any graph, which we apply to find the treatment number of the Petersen graph, and any $n\times n$ grid. Bernshteyn and Lee also consider the inspection number of subdivisions of graphs, and this motivates our study of subdivisions in Section~5. We provide an example to show that subdividing one edge in a graph can reduce the treatment number. 
Our main result in this section is that for any tree $T$, there is a subdivision of $T$ whose treatment number is at most 2.  We conclude with a series of questions and directions for future work in Section~\ref{sec:ques}.

Throughout this paper, we assume that every graph is connected and denote the vertex set of graph $H$ by $V(H)$ and its edge set by $E(H)$. 
For general graph theory terms not defined here, please consult a standard reference, such as~\cite{West}. 

 \section{Discrete-time treatment model} 
\label{sec:model}

As noted in the introduction, at each time-step, each vertex has a color: treated vertices are green, vertices that are vulnerable  are yellow, 
and vertices that are compromised are red. 
At time-step 0, all vertices are red. For $t\geq 1$, the set of vertices treated at time-step $t$ is denoted by $A_t$ and is called a {\it treatment set}.  We let $G_t$ be the set of green vertices, $Y_t$ be the set of yellow vertices, and $R_t$ be the set of red vertices at time-step $t$. A newly treated vertex becomes green and stays green for at least $r\ge 1$ time-steps, indicating the strength of the treatment.  
It will turn yellow at the subsequent time-step if it  is not treated and has a compromised neighbor and in this case we say it is \emph{corrupted} or \emph{compromised} by this neighbor.  Otherwise, it remains green at the subsequent time-step.
Once a vertex is yellow, it will become red after $s \ge 1 $ time-steps, unless it is treated again, indicating the vulnerable  period.  
The progression of a vertex from yellow to red occurs  regardless of the status of its neighbors. These transitions are made precise in Definition~\ref{trans} below. 

\subsection{Treatment protocols and color classes} 

In this section we give our formal 
definitions for measuring progress in treatments. 

\begin{defn} \rm For integers $r\ge 1$ and $s \ge 1$, an \emph{$(r,s)$-protocol} for a graph $H$ is a sequence $(A_1, A_2, \ldots, A_N)$ of treatment sets, where $A_i \subseteq V(H)$ for $1 \le i \le N$ and $A_i$ is the set of vertices treated during time-step $t$.  This $(r,s)$-protocol {\it clears} graph $H$ if all vertices are green at time-step $N$; that is, $G_N=V(H)$. 
\end{defn}

While the sequence of treatment sets  in an $(r,s)$-protocol  can be defined without specifying  
$r$ and $s$, we use $r$ and $s$ in the definition because the sequence is relevant only in the context of knowing the values of $r$ and $s$.

\begin{defn} \rm  If $J$ is the $(r,s)$-protocol $(A_1, A_2, \ldots, A_N)$ then its \emph{width}, denoted by  $width(J)$, is the maximum value of $|A_i|$ for $1 \le i \le N$. 
\end{defn} Thus, the  width of an $(r,s)$-protocol $J$ is the maximum number of vertices treated  
during any given time-step.  We sometimes write \emph{protocol} for \emph{$(r,s)$-protocol} when the context is clear.
Every graph  $H$ has an $(r,s)$-protocol of width $|V(H)|$  that clears it, namely $A_1 = V(H)$.  However, our goal is to find the smallest width of an $(r,s)$-protocol that clears a graph.
We next define the $(r,s)$-treatment number of a graph, which is the quantity we seek to minimize.

\begin{defn} \rm 
The \emph{$(r,s)$-treatment number} of a graph $H$, denoted by $\tau_{r,s}(H)$, is the smallest width of an $(r,s)$-protocol that clears $H$.
\end{defn} 
 
We begin with a simple example that shows $\tau_{1,1}(K_{1,m}) = 1$  and is illustrated in Figure~\ref{fig:K13}   
for $m=3$.

\begin{exa} \label{ex-K1m} \rm Let $r=s=1$ and consider $K_{1,m}$, $m\geq 3$, with vertex $x$ of degree $m$ and leaves $a_1, a_2, \ldots, a_m$. 
At time-step $0$, all vertices are red.  Let $A_1 = \{a_1\}$; that is, vertex $a_1$ is treated during time-step $1$.  If nether $a_1$ nor $x$ is  treated during time-step $2$, then $a_1$ will turn yellow during time-step $2$ (because $r=1$).  Let $A_2 = \{x\}$ and $A_3 = \{a_2\}$.  Since $a_3$ is not treated during time-step $3$, $a_3$ corrupts 
$x$ and  $x$ becomes yellow during time-step $3$.  Inductively, let $A_{2j} = \{x\}$, so $x$ is re-treated during time-step $2j$, and let $A_{2j+1}=\{a_{j+1}\}$, $2\leq j\leq m-2$. Although $x$ turns yellow on the odd time-steps, it never becomes red and does not corrupt 
previously treated vertices. In the last two time-steps, $A_{2m-2}=\{x\}$ and  $A_{2m-1} = \{a_m\}$. Then, $a_m$ does not corrupt 
$x$ at time $2m-1$, because it is treated at that time-step. Thus, the $(1,1)$-protocol $(\{a_1\},\{x\},\{a_2\},\{x\},\ldots, \{x\},  \{a_m\})$ clears $K_{1,m}$ because all vertices are green at time-step $2m-1$; that is, $G_{2m-1} = \{x,a_1,a_2,a_3, \ldots, a_m\}$. \end{exa}

We next introduce notation that partitions the set of green vertices at time-step $t$ according to the time-step when these vertices were last treated,  and  the yellow vertices at time-step $t$ according to when they last became re-compromised.

\begin{table}[ht]
\begin{center}
\begin{tabular}{cll} \hline 

\\ 
 
If $v\in A_t$,  &  &then $v\in G_t^r$.  \\

 \vspace{.05in} 
 
If $v\not \in A_t$, & and \\

 \vspace{.05in} 
 
 & $v\in R_{t-1}$,  &then $v\in R_t$.  \\
 
 \vspace{.05in} 
 
  & $v\in Y_{t-1}^1$, & then $v\in R_t$.  \\

   \vspace{.05in} 
   
  & $v\in Y_{t-1}^i$, $2\leq i\leq s$, & then $v\in Y_t^{i-1}$.  \\

   \vspace{.05in} 
   
  & $v\in G_{t-1}^i$, $2\leq i\leq r$,  &then $v\in G_t^{i-1}$.  \\

   \vspace{.05in} 

  & $v\in G_{t-1}^1$  and has a neighbor in $R_t$,  & then $v\in Y_t^s$.  \\

   \vspace{.05in} 
   
  & $v\in G_{t-1}^1$  and has no neighbor in $R_t$,  & then $v\in G_t^1$.  \\ 
  
 \\   \hline 

\end{tabular}

\caption{Transitions between classes of green, yellow, and red vertices.} 

\label{tabchart}

\end{center} 

\end{table}

\begin{defn} \rm \label{trans} 
Let $H$ be a graph with $(r,s)$-protocol $J=(A_1, A_2, \ldots, A_N)$. We  partition the green and yellow vertices at time-step $t$ as follows: 
$G_t = G_t^r \cup G_t^{r-1} \cup \cdots \cup\; G_t^1$ and $Y_t = Y_t^s \cup Y_t^{s-1} \cup \cdots \cup Y_t^1$ where $G_t^r=A_t$ and the remaining sets are defined by the transitions given in Table~\ref{tabchart} and illustrated in Figure~\ref{fig-markov-chains}. 
\end{defn} 

A vertex treated at time-step $t$ is a member of $G_t^r$ and if not treated again, its protection wanes and it transitions to $G_{t+1}^{r-1}$ and then $G_{t+2}^{r-2}$ and so on, as defined in Table~\ref{tabchart}.  A vertex $v \in G_t^1$ will become yellow at time-step $t+1$ (i.e., $v \in Y_{t+1}^s$) if $v \not\in A_{t+1}$ and $v$ has a neighbor in $R_{t+1}$.  A vertex in $Y_{t+1}^s$ transitions to $Y_{t+2}^{s-1}$, and so on, eventually to $Y_{t+s}^1$ and then $R_{t+s+1}$ unless treated. Figure~\ref{fig-markov-chains} illustrates the transitions in Table~\ref{tabchart}. 

\begin{figure}[htb]

\begin{center}
\begin{tikzpicture}[scale=1.2]
\draw[green!40!teal, thick] (1.5,-.5)--(5,-.5)--(5,1)--(1.5,1)--(1.5,-.5);
\draw[green!55!teal] (2,-.5)--(2,1);
\draw[green!55!teal] (2.5,-.5)--(2.5,1);
\draw[green!55!teal] (4.5,-.5)--(4.5,1);
\draw[red,thick](-2,.3) ellipse (1.5cm and .8cm);
\draw[yellow!70!red, thick] (-1,-4)--(2.5,-4)--(2.5,-2)--(-1,-2)--(-1,-4);
\draw[yellow!70!red] (-.5,-4)--(-.5,-2);
\draw[yellow!70!red] (1.5,-4)--(1.5,-2);
\draw[yellow!70!red] (2,-4)--(2,-2);

\node(0)[black] at (-1.95,.4) { compromised };
\node(0)[black] at (-1.95,.1) { (Red) };
\node(0)[black] at (3.5,.4) {treated};
\node(0)[black] at (3.45,0.1) {(Green)};
\node(0)[black] at (3.5,-.25) {$\cdots$};
\node(0)[black] at (.6,-2.6) {vulnerable};
\node(0)[black] at (.5,-3.05) { (Yellow) };
\node(e)[black] at (.5,-3.4) { $\cdots$ };

\draw[->, blue!45!gray] (2.3,-3.4)--(1.9,-3.4);
\draw[->, blue!45!gray] (1.7,-3.4)--(1.3,-3.4);
\draw[->, blue!45!gray] (-.3,-3.4)--(-.7,-3.4);
\draw[->, blue!45!darkgray, thick] (-3.2,.95) to [bend left=90] (-1.24,1.15);
\node(d) at (-4.2,1.3) {\footnotesize Do not treat};

\draw[->, green!35!darkgray,thick] (4.6,1.2) to [bend right=30] (1.9,1.2);
\draw[->, blue!45!darkgray](4.3,-.25)--(4.7,-.25);
\draw[->, blue!45!darkgray](2.3,-.25)--(2.7,-.25);
\draw[->, blue!45!darkgray](1.75,-.25)--(2.15,-.25);
\node(e) at (3.2,1.85) {\footnotesize Treat };
\node(e) at (6.9,.4) {\footnotesize no compromised neighbor};
\draw[->, blue!45!darkgray,thick] (5.1,0.9) to [bend left=110] (5.1,-.4);

\draw[->, green!35!darkgray,thick](-.4,.3)--(1.355,.3);
\node(a) at (.5,.5) {\footnotesize Treat};
\draw[->, blue!45!darkgray,thick](-.75,-1.9)--(-1.5,-.55);
\node(a) at (-2.15,-1.3) {\footnotesize Do not treat};
\draw[->,green!35!darkgray, thick](.7,-1.8) to [bend left=50] (1.4,-.4);
\draw[->, blue!45!darkgray,thick](4.7,-.6) to [bend left] (2.65,-2.25);
\node(b) at (0.1,-.9) {\footnotesize Treat};
\node(c) at (5.63, -1.5) {\footnotesize Do not treat and};
\node(c) at (5.6,-1.78) {\footnotesize compromised neighbor};
\end{tikzpicture}

\caption{A digraph showing the transitions between the states of treated, vulnerable 
and compromised.}
\label{fig-markov-chains}

\end{center} 

 \end{figure}

We record an observation that follows directly from Definition~\ref{trans}.

\begin{obs}
{\rm In an $(r,s)$-protocol for a graph $H$, if a vertex is treated at time $t$ then the earliest it can become red is time-step $t + r + s$.
}
\label{non-comprom-time-obs}
\end{obs}

For the graph $ K_{1,3}$, the $(1,1)$-protocol given in Example~\ref{ex-K1m} is $(\{a_1\}, \{x\}, \{a_2\}, \{x\},\{ a_3\})$. Another $(1,1)$-protocol for $K_{1,3}$ is  $J'=(\{a_1\}, \{x\}, \{a_2\},\{x\},\{a_3\},\{a_1\})$.  Note that it is unnecessary to treat $a_1$ in time-step 6. In a minimal protocol, we do not want to treat vertices frivolously.
  We give a formal definition of \emph{minimal} below.

\begin{defn} \rm  
An $(r,s)$-protocol $(A_1, A_2, \ldots, A_N)$  for graph $H$ is \emph{minimal}  if it satisfies the following for all time-steps $t$ where $1 \leq t \leq N$: if $v \in G^{\ell}_t$ and $v \in A_{t+1}$ then $v$ has a neighbor in $R_{t+\ell} \cup R_{t+\ell+1}\cup \cdots \cup R_{t+r}$.
\end{defn}

We show in Proposition~\ref{min-al} that any $(r,s)$-protocol that clears $H$ can be transformed into a minimal protocol.

\begin{prop} \label{min-al} Let $J$ be an $(r,s)$-protocol $(A_1, A_2, \ldots, A_N)$ that clears $H$. Then there exists a minimal protocol $J'=(A'_1, A'_2, \ldots, A'_N)$ that clears $H$ and for which $A'_i\subseteq A_i$, $1\leq i\leq N$. 
\end{prop}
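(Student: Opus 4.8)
The plan is to remove unjustified immunizations one at a time until no violation of the minimality condition remains. Concretely, I would call an immunization \emph{unjustified} if there is a time-step $t$ and a vertex $v$ with $v \in G_t^{\ell}$ and $v \in A_{t+1}$, yet $v$ has no neighbor in $R_{t+\ell} \cup R_{t+\ell+1} \cup \cdots \cup R_{t+r}$ (all color classes being those of the current protocol). So long as an unjustified immunization exists, I delete $v$ from $A_{t+1}$, leaving every other set unchanged, and repeat. Since deleting one vertex from one immunization set strictly decreases the nonnegative integer $\sum_{i=1}^N |A_i|$, this process must terminate, and at termination no unjustified immunization remains, which is exactly the minimality condition. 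Because we only ever delete vertices, we automatically maintain $A'_i \subseteq A_i$ for every $i$. Everything therefore reduces to a single-removal lemma: if $J$ clears $H$ and contains an unjustified immunization of $v$ at time-step $t+1$, then the protocol $J'$ obtained by deleting $v$ from $A_{t+1}$ still clears $H$.

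To prove the single-removal lemma I would compare the evolutions of $J$ and $J'$, which agree on every immunization set except at time-step $t+1$. The key is to show by induction on the time-step $\tau$ that (i) every vertex $w \neq v$ has the same color class under $J$ and under $J'$, and (ii) $v$ is green under $J'$ at every time-step in the window $[t, t+r]$. The crucial point driving both parts is that a vertex influences its neighbors only when it is \emph{red}: if I can guarantee that $v$ is never red under $J'$ at a time when it is not red under $J$, then $v$ sends the identical contagion signal to its neighbors in both evolutions, so by Definition~\ref{trans} every other vertex transitions identically, giving (i); and (i) in turn means that the red neighbors of $v$ are the same under $J$ and under $J'$, which is what is needed to run the argument for (ii). The base case $\tau \le t$ is immediate since $J$ and $J'$ agree up to time-step $t$.

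For part (ii) I would trace $v$'s trajectory explicitly. Under $J'$, starting from $v \in G_t^{\ell}$ and receiving no immunization at $t+1$, $v$ wanes through $G_{t+1}^{\ell-1}, \ldots, G_{t+\ell-1}^{1}$ and first becomes susceptible to turning yellow at time-step $t+\ell$; it remains in $G^1$ through time-step $t+r$ precisely when it has no red neighbor at any of $t+\ell, \ldots, t+r$, which is exactly the unjustified-immunization hypothesis (combined with (i), so that these red classes agree with those of $J$). Under $J$, the immunization at $t+1$ sends $v$ to $G_{t+1}^r$, from which it wanes to $G_{t+r}^1$ by time-step $t+r$. Hence at time-step $t+r$ both evolutions place $v$ in $G_{t+r}^1$, and if $v$ happens to be re-immunized at some time-step in $(t+1, t+r]$ the two evolutions re-synchronize even earlier, since immunization forces the state $G^r$ regardless of the previous color. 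From the re-synchronization time-step onward $v$ matches $J$, and throughout $[t, t+r]$ it is green under both, so $v$ is never red under $J'$ when it is not under $J$, closing the induction. The range $R_{t+\ell} \cup \cdots \cup R_{t+r}$ in the definition of minimal is exactly the set of time-steps at which the immunization at $t+1$ still shields $v$ from recontamination while the non-immunized trajectory does not, which is why these are precisely the endpoints that appear.

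Finally, clearing is preserved: at time-step $N$ every $w \neq v$ is green because it is green under $J$, and $v$ is green because either $N \geq t+r$, so $v$ matches the cleared protocol $J$, or $N < t+r$, so time-step $N$ lies in the window $[t,t+r]$ where $v$ is green under $J'$; in either case $G_N = V(H)$ under $J'$. I expect the main obstacle to be the book-keeping in the induction for the single-removal lemma, specifically verifying carefully that the lone color difference at $v$ never propagates, that is, that $v$ is never red under $J'$ before the evolutions re-synchronize, since a single premature recontamination of $v$ could cascade through the graph and destroy the clean comparison between $J$ and $J'$.
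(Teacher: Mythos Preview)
Your proof is correct and shares the paper's core observation---removing an ``unjustified'' immunization of $v$ at time $t+1$ leaves $v$ green throughout $[t,t+r]$, so it never turns red when it would not have under $J$, and hence no other vertex's color sequence changes. The paper, however, organizes the removals differently: it performs a backward sweep, processing $A_N, A_{N-1}, \ldots, A_1$ in order and at step $k$ deleting \emph{all} vertices $v \in G_k^\ell \cap A_{k+1}$ with no neighbor in $R_{k+\ell}\cup\cdots\cup R_{k+r}$ simultaneously, maintaining the invariant that minimality holds for all $t \ge k$. Your approach instead removes one unjustified immunization at a time in arbitrary order and uses the strict decrease of $\sum_i |A_i|$ for termination. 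Your route buys a cleaner termination argument and avoids having to check that the backward-sweep invariant survives each batch deletion (a point the paper leaves implicit); the paper's route is more structured and dispatches each time-step in a single pass. Your single-removal lemma, with the simultaneous induction on (i) and (ii) and the explicit re-synchronization at time $t+r$ (or earlier if $v$ is re-immunized), is argued more carefully than the paper's one-sentence justification that ``the immunization is unnecessary.''
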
 

\begin{proof} Since $J$ clears $H$, all vertices are in $G_N$. Thus, any vertices that are not green at time-step  $N-1$ must be in $A_{N}$.  Any vertex that is green at time-step $N-1$ remains green at time-step $N$ without needing to be treated, because any of its neighbors that are red at time-step $N-1$ must be in $A_N$.  Hence we can replace $A_N$ by $A'_{N}=A_{N}-G_{N-1}\subseteq Y_{N-1}\cap R_{N-1}$. 

By induction, assume that we have a protocol $(A_1, A_2, \ldots,  A_{k+1}, A'_{k+2}, A'_{k+3}, \ldots, A'_N)$ such that for all time-steps $t$ where $k+1 \leq t \leq N-1$ and any vertex $v \in G^{\ell}_t\cap A'_{t+1}$ with $1\leq \ell \leq r$, then $v$ has a neighbor in $R_{t+\ell} \cup R_{t+\ell+1}\cup \cdots \cup R_{t+r}$ (since $v$ remains green until time-step $t+\ell-1$ without being treated and the protection given to $v$ by being treated at time-step $t+1$ lasts until time-step $t+r$). For the next step in the induction, consider any vertex $u\in G^{\ell}_{k}\cap A_{k+1}$ that does not have a neighbor in $R_{k+\ell} \cup R_{k+\ell+1}\cup \cdots \cup R_{k+r}$. Because the protection $u$ gets from being treated at time-step $k+1$ only lasts until time-step $k+r$, if $u$ has no red neighbors in that time period, then the treatment is unnecessary, and we can remove $u$ from $A_{k+1}$. Let $W$ be the set of such vertices $u$. Then we can replace $A_{k+1}$ by $A'_{k+1}=A_{k+1}-W$, and the protocol $(A_1, A_2, \ldots, A_k, A'_{k+1}, A'_{k+2}, A'_{k+3}, \ldots, A'_N)$ clears $H$. Hence our final protocol $(A'_1, A'_2,  \ldots, A'_N)$ clears $H$ and is minimal. 
\end{proof}

\subsection{Treatments of subgraphs}\label{subsec:subgraphs}

Figure~\ref{fig-caterpillar} shows a graph $H$ with $\tau_{1,1}(H) = 1$ and a subgraph $H'$ of $H$.  In this figure, $(1,1)$-protocols are shown for $H$ and $H'$ by labeling each vertex by the set of time-steps at which it is treated, unlike in Figure~\ref{fig:K13}.  Observe that the protocol $J'$ shown for $H'$ is inherited from the protocol $J$ for $H$ by restricting each treatment set to vertices in $H'$.
The next theorem shows how to generalize this example. 

\begin{figure}[ht]
\[\includegraphics[width=0.835\textwidth]{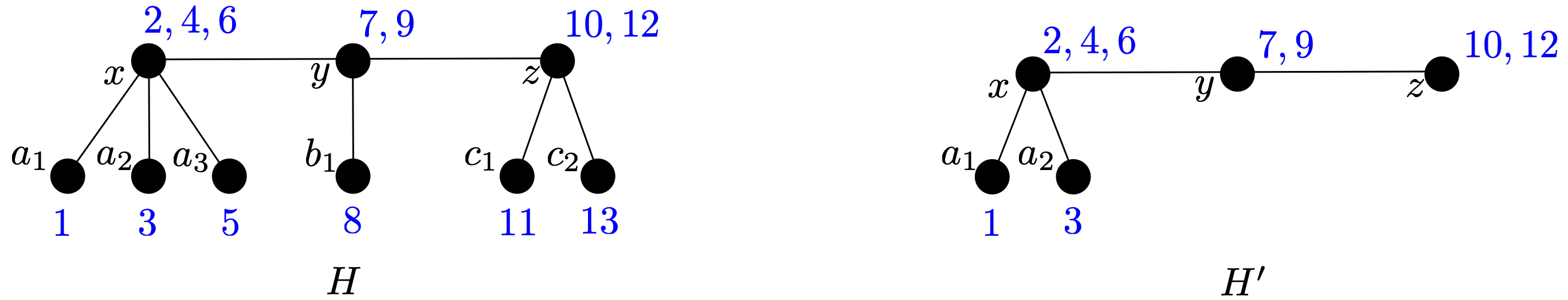}\]
 
\caption{A protocol $J$ that clears $H$ and the restricted protocol $J'$ that clears subgraph $H'$.  A vertex has label $t$ if it is treated at time-step $t$.}
\label{fig-caterpillar}
\end{figure}

\begin{thm} \label{subgraph-lemma} Let $H$ be a graph and $(A_1,A_2,\dots,A_N)$ be an $(r,s)$-protocol that clears $H$. If $H'$ is a subgraph of $H$, and $A'_i=A_i\cap V(H')$, for $1\leq i\leq N$, then graph $H'$ is cleared by $(r,s)$-protocol $(A_1',A_2',\dots,A_N')$.
\label{subgraph-thm}
\end{thm}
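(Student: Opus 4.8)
The plan is to show that the restricted protocol behaves on $H'$ exactly as the original protocol does on the vertices of $H'$, so that any vertex green at time-step $N$ in $H$ remains green at time-step $N$ in $H'$. The natural approach is induction on the time-step $t$, proving the stronger statement that for every vertex $v \in V(H')$ and every $t$, the color-class of $v$ under $(A_1',\dots,A_N')$ in $H'$ is at least as ``protected'' as its color-class under $(A_1,\dots,A_N)$ in $H$. Here I would make ``at least as protected'' precise using the ordering induced by Table~\ref{tabchart}: green is better than yellow is better than red, and within the greens a larger superscript $G_t^i$ (more recently immunized) is better, while within the yellows a larger superscript $Y_t^i$ (more recently reinfected, hence further from red) is better. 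Since $G_N = V(H)$ means every $v \in V(H')$ is green at time $N$ in $H$, the monotonicity would force every such $v$ to be green in $H'$ as well, giving $G_N' = V(H')$ and clearing $H'$.

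\medskip
\noindent First I would set up the induction cleanly. At time $0$ all vertices are red in both graphs, so the base case is immediate (equality, in fact). For the inductive step at time $t+1$, I would check each row of Table~\ref{tabchart} against the restriction. The key observation driving every case is that immunization sets restrict literally, $v \in A_{t+1}'$ if and only if $v \in A_{t+1}$ and $v \in V(H')$, so the ``if $v \in A_{t+1}$'' rows behave identically in both graphs. Likewise the deterministic aging transitions—green decrementing its superscript, yellow decrementing its superscript, and yellow with superscript $1$ turning red—depend only on $v$'s own current class and not on its neighbors, so they are preserved by the inductive hypothesis directly.

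\medskip
\noindent The only rows that reference neighbors are the two concerning a vertex $v \in G_{t}^1$: it turns yellow if it has a red neighbor at time $t+1$, and stays green otherwise. This is where the subgraph relation does the work and is the step I expect to be the main obstacle. The crucial point is that $H'$ is a \emph{subgraph} of $H$, so the neighborhood of $v$ in $H'$ is a subset of its neighborhood in $H$. Hence if $v$ has no red neighbor in $H$ at time $t+1$, then a fortiori it has no red neighbor among its (fewer) neighbors in $H'$—provided those neighbors are not red in $H'$ when they were green in $H$, which is exactly what the inductive hypothesis guarantees via the monotonicity direction (a vertex no worse off in $H'$ cannot be red in $H'$ if it is non-red in $H$). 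So whenever $v$ stays green in $H$, it stays green in $H'$; and the only remaining possibility is that $v$ turns yellow in both, or turns yellow in $H$ but stays green in $H'$, both of which preserve the ordering. I would therefore verify that in every branch the $H'$-class of $v$ is $\ge$ its $H$-class in the protection ordering, completing the induction. Applying this at $t = N$ yields $G_N' = V(H')$, as desired.
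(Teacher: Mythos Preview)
Your approach is correct and somewhat different from the paper's. The paper argues by contradiction: it takes the smallest $t$ at which some $w\in V(H')$ is green under $J$ but not under $J'$, observes that $w$ must then have a neighbor $x$ that is red in $H'$ but not in $H$ at time $t$, and traces $x$'s history back to its last immunization to obtain a contradiction. Your forward induction with the explicit ``protection'' total order $G^r>\cdots>G^1>Y^s>\cdots>Y^1>R$ is a cleaner and more systematic packaging of the same phenomenon: it makes the monotone invariant visible and handles all the intermediate yellow levels uniformly, whereas the paper's argument threads through those levels in an ad hoc way for the single offending vertex. The cost of your route is the need to set up and verify the ordering; the benefit is that once you do, each transition in Table~\ref{tabchart} is a one-line check.

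One point to tighten: in the $G^1$ branch you need that a neighbor $w$ of $v$ with $w\in R'_{t+1}$ satisfies $w\in R_{t+1}$, but the inductive hypothesis is stated at time $t$, not $t+1$. This is not circular, since $R_{t+1}=(R_t\cup Y_t^1)\setminus A_{t+1}$ depends only on time-$t$ data and $A_{t+1}$; your hypothesis at time $t$ gives $R'_t\cup Y'^1_t\subseteq R_t\cup Y^1_t$ on $V(H')$, and $A'_{t+1}=A_{t+1}\cap V(H')$, so $R'_{t+1}\subseteq R_{t+1}$ on $V(H')$ follows before you handle the $G^1$ vertices. Stating this explicitly closes the only place a reader might worry.
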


\begin{proof} Let $J = (A_1,A_2,\dots,A_N)$ and $J' = (A_1',A_2',\dots,A_N')$. Since $J$ clears $H$, all vertices of $H$ are green at the end of time-step $N$ under protocol $J$.  For a contradiction, suppose that $J'$ does not clear $H'$; hence  there is some vertex in $V(H')$ that is not green at the end of time-step $N$ under protocol $J'$.  Let $t$ be the smallest integer such that there exists $w\in V(H')$ that is green at time-step $t$ under $J$, but not green under $J'$; that is, $w\in G_t(H)$, but $w\not\in G_t(H')$. By the minimality of our choice of $t$, we know $w\in G_{t-1}(H)\cap G_{t-1}(H')$. Since $w\not \in G_t(H')$, $w$ has a neighbor $x$ in $H'$ such that $x\in R_t(H')$ and $x\not \in R_t(H)$. Since $x \not\in R_t(H)$, vertex $x$ must have been treated at some time-step before $t$.  Consider the last time-step $j$ for which $x$ was treated, in both protocols, before time-step $t$. Then because $x$ is red at time $t$ under $J'$, then $x$ must be compromised between time-steps $j$ and $t$. By the minimality of $t$ and the choice of $w$, $x$ must turn yellow at the same time under both $J$ and $J'$. Since $x$ is not treated between time-steps $j$ and $t$, vertex $x$ turns red at the same time in both protocols, contradicting $x\in R_t(H')$ and $x\not \in R_t(H)$.
\end{proof}

 An immediate consequence of Theorem~\ref{subgraph-thm} is that the $(r,s)$-treatment number of a graph is at least as large as the $(r,s)$-treatment number of each of its subgraphs.

\begin{cor} \label{subgraph-cor} If $H'$ is a subgraph of $H$, then $\tau_{r,s}(H')\leq \tau_{r,s}(H)$.
\end{cor}

\section{Treatment number and pathwidth}\label{sec:results}

In this section, we use the notion of path decompositions to bound our parameter from above, and then go on to make use of two particular types of protocols that we will call monotone and cautious.

\subsection{An upper bound for the treatment number using pathwidth}\label{subsec:pw}

As the example in Figure~\ref{fig-caterpillar} demonstrates, if a graph has a path-like  ordering of vertices, this ordering can be used to construct a protocol.  The next definition of path decomposition formalizes this notion and Theorem~\ref{monotone-upper} 
uses the resulting pathwidth to provide an upper bound for the $(r,s)$-treatment number.

\begin{defn} \rm
A \emph{path decomposition} of a graph $H$ is a sequence $(B_1,\dots,B_m)$ where each $B_i$ is a subset of $V(H)$ such that (i) if $xy \in E(H)$ then there exists an $i$, for which $x,y \in B_i$ and (ii)
for all $w \in V(H)$, if $w \in B_i \cap B_k$  and $i \leq j \leq k$ then $w \in B_j$. 

\label{path-decomp-def}
\end{defn}

The subsets $B_1,\dots,B_m$ are often referred to as \textit{bags} of vertices.  We can think of the sequence $(B_1,\dots,B_m)$ as forming the vertices of a path $P$.  Conditions (i) and (ii) ensure that if two vertices are adjacent in $H$, they are in some bag together and that the bags containing any particular vertex of $H$ induce a subpath of $P$. The graph $K_{1,3}$, shown in Figure~\ref{fig:K13}, has a path decomposition with bags $B_1=\{a_1,x\}$, $B_2=\{a_2,x\}$, $B_3=\{a_3,x\}$. The width of a path decomposition  and the pathwidth of a graph are defined below in such a way that the pathwidth  of a path is 1.

\begin{defn} \rm
The \emph{width} of a path decomposition $(B_1,\dots,B_m)$ is $\max_{1 \leq i \leq m} |B_i|-1$ and the \emph{pathwidth} of a graph $H$, denoted $pw(H)$, is the minimum width taken over all path decompositions of $H$.
\label{pathwidth-def}
\end{defn}

The next example illustrates the proof technique for  Theorem~\ref{monotone-upper}.

\begin{exa}\label{ex4x4} 
{\rm  We provide a $(1,2)$-protocol for the Cartesian product $P_4 \square P_4$ based on a minimum width path decomposition.  It is well known that $pw(P_4 \square P_4)=4$; for example, see \cite{Bo98}.  Since our protocol will have width 2 and  $\lceil (1+4)/(1+2)\rceil = 2$, our protocol will  achieve the upper bound in Theorem~\ref{monotone-upper}.  
Figure~\ref{fig:4x4} illustrates the bags in such a path decomposition and each bag is divided into $3$ sections, where the sections of bag $B_i$ are: section 1, section 2, section 3.  Observe that each vertex that appears in multiple bags always occurs in the same section of those bags.  

Starting at bag $B_1$, create a $(1,2)$-protocol where, in a given bag $B_i$, we treat vertices in section 1,  2,  3, then section 1, 2, 3 again; after this, we move to bag $B_{i+1}$ and repeat the process.  Thus, in our example $A_1 = A_4 = \{v_1,v_2\}$,  $A_2 = A_5 = \{v_3,v_4\}$, $A_3 = A_6 = \{v_5\}$, $A_7 = A_{10} = \{v_2,v_6\}$, $A_8 = A_{11} = \{v_3,v_4\}$, $A_9 = A_{12} = \{v_5\}$, and so on.  One can verify this leads to a  $(1,2)$-protocol that clears $P_4 \square P_4$.  In particular, observe that  for each vertex $v$,  if we consider the time period between the first and last time-step $v$ is treated, it is treated exactly every $r+s$ time-steps. By Observation~\ref{non-comprom-time-obs}, vertex $v$ does not turn red during this time period and indeed in this protocol, once a vertex is green, it never again becomes red.
}
\end{exa}

\begin{figure}[htbp]
\[ 
\includegraphics[width=\textwidth]{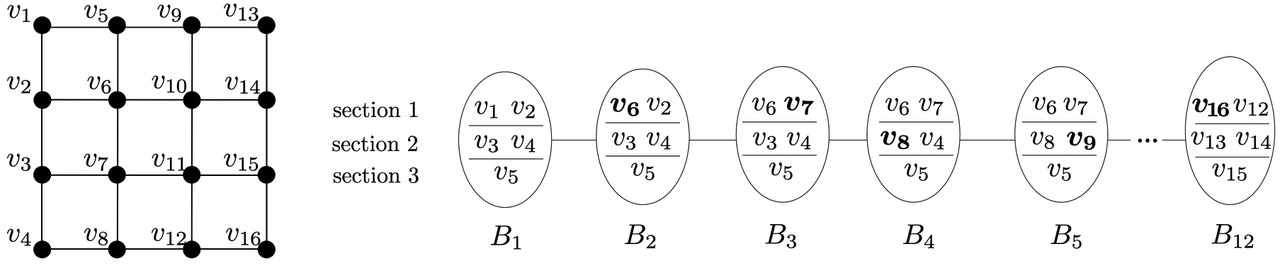}
\]
\caption{A minimum width path decomposition of $P_4 \square P_4$. The bold vertices are replacements as we move from left to right in the path.}

\label{fig:4x4}
\end{figure}

\begin{thm} \rm  If $H$ is a graph then $\tau_{r,s}(H) \leq \lceil \frac{1+pw(H)}{r+s}\rceil.$
\label{monotone-upper} 
\end{thm}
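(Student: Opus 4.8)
The plan is to fix a path decomposition $(B_1,\dots,B_m)$ of $H$ of minimum width, so that $|B_i|\le 1+pw(H)$ for every $i$ (Definition~\ref{pathwidth-def}), and to convert it into an $(r,s)$-protocol in which each immunization set is a \emph{section} of a single bag of size at most $w:=\lceil(1+pw(H))/(r+s)\rceil$, exactly in the spirit of Example~\ref{ex4x4}. The first task is to cut the bags into $r+s$ sections \emph{consistently}, meaning a vertex lying in several consecutive bags is always assigned to the same section. To arrange this I would use the fact that the ``shares-a-bag'' graph on $V(H)$ is an interval graph: associate to each vertex $v$ the interval of indices $i$ with $v\in B_i$, and observe by condition~(ii) of Definition~\ref{path-decomp-def} that two vertices lie in a common bag precisely when their intervals meet. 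Its clique number is $\max_i|B_i|\le 1+pw(H)$, and since interval graphs are perfect I can properly colour $V(H)$ with $1+pw(H)$ colours so that vertices sharing a bag get distinct colours. Partitioning these colours into $r+s$ groups of at most $w$ colours each (possible since $(r+s)w\ge 1+pw(H)$) and calling the $k$th group ``section $k$'' yields, inside every bag, at most $w$ vertices per section, with each vertex's section constant along its interval.

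Next I would define the protocol by sweeping through $B_1,\dots,B_m$ and, in each bag, immunizing section $1$, then $2$, up to section $r+s$, and then repeating this once more (two full passes per bag) before advancing, precisely as in Example~\ref{ex4x4}. The key bookkeeping fact to verify is that, from the first time a vertex $v$ is immunized until the last, consecutive immunizations of $v$ are exactly $r+s$ time-steps apart: within a bag the two passes are $r+s$ apart, and the constancy of sections across consecutive bags makes the last immunization of $v$ in one bag and its first in the next also $r+s$ apart. By Observation~\ref{non-contag-time-obs}, re-immunizing $v$ every $r+s$ steps prevents $v$ from turning red while it is still being immunized (its colour may cycle down to $G^1$ and even flicker to yellow, but it is re-immunized precisely at the step it would otherwise redden).

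The heart of the argument is to show the protocol is \emph{monotone}: no vertex ever becomes red once it has been immunized. I would prove this by taking the first time $\tau$ at which some immunized vertex $v$ reddens. Such a $v$ must have turned yellow $s$ steps earlier while sitting in $G^1$ next to a red neighbour $u$, and by minimality of $\tau$ the vertex $u$ is not yet immunized, so every bag containing $u$ comes at or after the bag processed at that moment. Tracing $v$'s schedule shows $v$ can redden only if the immunization putting it into its final green phase was its \emph{last} immunization, namely the one in the second pass of its last bag $B_j$; otherwise the next immunization, at most $r+s$ steps later, intervenes and Observation~\ref{non-contag-time-obs} forbids reddening. But $u$ shares a bag with $v$ at some index at most $j$. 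If that index equals $j$, then $u\in B_j$ was already immunized in the \emph{first} pass of $B_j$, hence is green when $v$ turns yellow, a contradiction; if it is smaller than $j$, then $u$'s first bag precedes $B_j$ while $u$ must be unimmunized at a time lying in block $j$ or later, again a contradiction. This is exactly where two passes per bag are essential: with a single pass a vertex finishing in $B_j$ could be reddened by a neighbour that first appears in $B_j$ but is immunized later in the same pass, and this boundary analysis is the main obstacle. Granting monotonicity, every vertex reaches a green class after its final immunization and, having no red neighbours thereafter, stays green; thus all vertices are green at the final time-step $N$, and since each immunization set is one section of size at most $w$, the protocol clears $H$ with width at most $\lceil(1+pw(H))/(r+s)\rceil$, giving the stated bound.
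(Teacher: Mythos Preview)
Your proof is correct and builds the same protocol as the paper: partition each bag of a minimum-width path decomposition into $r+s$ sections of size at most $w=\lceil(1+pw(H))/(r+s)\rceil$, then sweep through the bags immunizing one section at a time with two full passes per bag. The paper assigns sections iteratively (vertices of $B_{k+1}\cap B_k$ inherit their section from $B_k$, and new vertices are distributed to keep sections balanced), whereas your interval-graph colouring gives a cleaner global assignment up front and makes the size bound immediate. For correctness the paper argues by induction on the bag index that after processing $B_1,\ldots,B_k$ every vertex in $\bigcup_{i\le k}B_i$ is green or yellow, with every yellow one having a red neighbour in $B_{k+1}$; your minimal-counterexample argument for monotonicity reaches the same conclusion by a different route, and your case split $i=j$ versus $i<j$ is in fact unnecessary since $u\in B_i$ with $i\le j$ already forces $u$ to be first immunized no later than the end of pass~1 of $B_j$, which precedes $t''$. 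One small point worth making explicit: your closing phrase ``having no red neighbours thereafter, stays green'' relies on the fact that every neighbour of $v$ shares a bag with $v$ of index at most $j$ and is therefore first immunized by the end of pass~1 of $B_j$, strictly before $v$'s final immunization in pass~2 of $B_j$; together with monotonicity this gives $v$ no red neighbours once it last reaches $G^1$, so the protocol does clear $H$.
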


\begin{proof} Let $(B_1,\dots,B_m)$ be a path decomposition of $H$ whose width is $pw(H)$. By Definition~\ref{pathwidth-def} we know that  $|B_i| \leq 1+pw(H)$ for $1 \leq i \leq m$. For each $k$, we will partition $B_k$ into $r+s$ sections, $sct(k, 1), sct(k,2), \ldots, sct(k,r+s)$, some of which may be empty. 
We use these sections to create an $(r,s)$-protocol $(A_1,\dots,A_N)$ for graph $H$.  
In this protocol, we will treat vertices one bag at a time.  For each bag, we twice cycle through its sections, as in Example~\ref{ex4x4}.  Each partition of $B_k$ into $r+s$ sections will lead to $2(r+s)$ time-steps in the protocol.

Partition the vertices of $B_1$ arbitrarily into $sct(1,1), sct(1,2), \ldots, sct(1,r+s)$ so that each section has at most $\lceil \frac{1+pw(H)}{r+s}\rceil$ vertices.
Let $A_j=A_{(r+s)+j}=sct(1,j)$ for $1\leq j\leq r+s$. 
By construction, at the end of time-step $r+s$, all vertices in $B_1$ are green or yellow.  At the end of time-step $2(r+s)$, we have the additional property that any vertex in $B_1$, all of whose neighbors are also in $B_1$, is green.  Thus, at the end of time-step $2(r+s)$, any yellow vertex in $B_1$ has a red neighbor in $B_2$.

Partition $B_2$ into $r+s$ sets $sct(2,1), sct (2,2), \dots, sct(2,r+s)$ 
where $B_2\cap sct(1,i)\subseteq sct(2,i)$ 
for $1 \leq i \leq r+s$, that is, vertices of $B_2$ that are in section $i$ of $B_1$ are placed in section $i$ of $B_2$.  Distribute the vertices in $B_2 - B_1$ arbitrarily into $sct(2,1), sct (2,2), \dots, sct(2,r+s)$ 
so that each set has at most $\lceil \frac{1+pw(H)}{r+s}\rceil$ vertices. 
Let $A_{2(r+s)+1} = sct(2,1)$, $A_{2(r+s)+2}=sct(2,2)$, $\dots$, $A_{3(r+s)} = sct(2,r+s)$ (the first cycle of treatments for $B_2$) and $A_{3(r+s)+i} = A_{2(r+s)+i}$ for $1 \leq i \leq r+s$ (the second cycle of treatments for $B_2$).

At the end of time-step $4(r+s)$ all vertices in $B_1 \cup B_2$ are green or yellow and any yellow vertices have a red neighbor in $B_3$.

We continue by induction.  Suppose we have considered $B_1,\dots,B_k$ and extended our protocol so that at the end of time-step $2k(r+s)$ all vertices in $B_1 \cup \dots \cup B_k$ are green or yellow and any yellow vertices have a red neighbor in $B_{k+1}$.  We now partition $B_{k+1}$ into $r+s$ sets $sct(k+1,1), sct(k+1,2),\dots, sct(k+1,r+s)$ 
so that $B_{k+1} \cap sct(k,i) \subseteq sct(k+1,i)$ for $1 \leq i \leq r+s$. Distribute the vertices in $B_{k+1} - B_k$ arbitrarily into $sct(k+1,1), sct(k+1,2),\dots, sct(k+1,r+s)$
so that each set again has at most $\lceil \frac{1+pw(H)}{r+s}\rceil$ vertices. 
Let $A_{2k(r+s)+1} = sct(k+1,1) $, $A_{2k(r+s)+2}=sct(k+1,2)$, $\dots$, $A_{2k(r+s)} = sct(k+1,r+s)$; and $A_{(2k+1)(r+s)+i} = A_{(2k)(r+s)+i}$ for $1 \leq i \leq r+s$. 

At the end of time-step $(2k+2)(r+s)$ all vertices in $B_1 \cup \dots \cup B_{k+1}$ are green or yellow and any yellow vertices have a red neighbor in $B_{k+2}$, provided $B_{k+2}$ exists.  Since $B_m$ is the highest-indexed bag, at time-step $2m(r+s)$, all vertices in $B_1 \cup \dots \cup B_m$ are green.  Therefore, our protocol $(A_1,\dots,A_N)$ clears graph $H$ with $N = 2m(r+s)$ and furthermore, the $|A_i| \leq \lceil (1+pw(H))/(r+s)\rceil$.\end{proof}

\subsection{Nested sets of compromised vertices}
\label{subsec:mmc}

In this section we consider two special types of protocols:  monotone (Definition~\ref{monot-d}) and cautious (Definition~\ref{cautious-def}).
In a monotone protocol, once a vertex is treated  it never again becomes  compromised.  Thus, over time, the sets of compromised vertices are nested; at each time-step the current set of compromised vertices is a subset of those at the previous time-step. Inspired by Observation~\ref{non-comprom-time-obs},  in Definition~\ref{cautious-def} we define an $(r,s)$-protocol to be \emph{cautious} if for each vertex $v$, in between the first and last time-steps in which $v$ is treated, it is treated at least every $r+s$ time-steps. 

\begin{defn} \rm An $(r,s)$-protocol  $(A_1, A_2, \ldots, A_N)$ that clears graph $H$  is \emph{monotone}  if  it satisfies the following:  for all $v \in V(H)$, if $v \in A_j$ then $v \not\in R_i$ for all $i > j$.
\label{monot-d}
\end{defn}

Bernshteyn and Lee~\cite{BL21} define a protocol to be monotone if green vertices never become red again; however, in their model, there are no yellow vertices.  Our model combines yellow and green vertices in the definition of monotone.

The next proposition shows that we can equivalently define a protocol to be monotone if vertices that are about to be compromised are treated.  

\begin{prop} Let  $J$ be an $(r,s)$-protocol for graph $H$ where  $ J = (A_1, A_2, \ldots, A_N)$.  Then $J$  is monotone if and only if for every $j: 1 \le j \le N$ and every $v \in Y_j^1$, we have $v \in A_{j+1}$.
\label{heedful-prop}
\end{prop}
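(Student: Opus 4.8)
The plan is to prove both implications by contradiction, reading off the relevant transitions directly from Table~\ref{tabchart}. The single transition that does all the work is the rule that a vertex $v \notin A_t$ lies in $R_t$ exactly when either $v \in R_{t-1}$ or $v \in Y_{t-1}^1$; in particular, the \emph{only} way for a vertex to \emph{enter} the red class (from a non-red state) is to sit in $Y_{t-1}^1$ and fail to be immunized at time $t$.

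For the backward direction, I would assume that every $v \in Y_j^1$ lies in $A_{j+1}$ and suppose toward a contradiction that $J$ is not monotone. Then some vertex $v$ has $v \in A_j$ yet $v \in R_i$ for some $i > j$; choose $i$ smallest. Since $v \in A_j = G_j^r$ we have $v \notin R_j$, and by minimality of $i$ we get $v \notin R_t$ for all $j \le t < i$; in particular $v \notin R_{i-1}$ (note $i \ge j+1 \ge 2$, so $i-1 \ge 1$ and $Y_{i-1}^1$ is well-defined). Hence $v$ enters the red class at time $i$, which by the transition rule forces $v \in Y_{i-1}^1$ and $v \notin A_i$. But the hypothesis applied with $j = i-1$ gives $v \in A_i$, a contradiction. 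This shows $J$ is monotone.

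For the forward direction, I would assume $J$ is monotone and take any $v \in Y_j^1$, aiming to show $v \in A_{j+1}$. If not, the transition rule again gives $v \in R_{j+1}$. The key auxiliary fact is that any yellow vertex has been immunized at some earlier time-step. The clean way to see this: a vertex $v$ that never appears in any $A_{t'}$ is red at time $0$ and, by the persistence rule $R_{t-1} \to R_t$ for non-immunized vertices, remains red at every time-step, hence is never yellow. Contrapositively, $v \in Y_j^1$ forces $v \in A_{j'}$ for some $j' < j+1$. Monotonicity then gives $v \notin R_i$ for all $i > j'$, and in particular $v \notin R_{j+1}$, contradicting $v \in R_{j+1}$. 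Therefore $v \in A_{j+1}$.

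I expect the main obstacle to be the forward direction's auxiliary fact that a yellow vertex must have been previously immunized, since the definition of monotone (Definition~\ref{monot-d}) only constrains vertices that appear in some immunization set $A_{j'}$. One can establish it either by the ``never-immunized vertices stay red forever'' argument above, or by inducting down the chain of transitions in Table~\ref{tabchart}, tracing $Y^1 \leftarrow \cdots \leftarrow Y^s \leftarrow G^1 \leftarrow \cdots \leftarrow G^r = A_{t'}$. The remaining work is purely bookkeeping: verifying the index bounds ($i \ge 2$, so $Y_{i-1}^1$ makes sense) and that $v \notin R_{i-1}$ in the backward direction.
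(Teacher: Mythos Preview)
Your proposal is correct and follows essentially the same approach as the paper's proof; both directions proceed by contradiction via the same transition-table observation that the only way to enter $R_t$ from a non-red state is to be in $Y_{t-1}^1$ and not in $A_t$, and both use the auxiliary fact that a yellow vertex must have been previously immunized (since all vertices start red). The paper's version is considerably terser---its backward direction is essentially a one-line assertion---whereas you have spelled out the minimal-$i$ argument and the index bookkeeping explicitly.
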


\begin{proof}
First suppose that $J$ is a monotone $(r,s)$-protocol for graph $H$.  Since all vertices are initially red, if $v \in Y_j^1$ then $v \in A_i$ for some $i: 1 \leq i <j$.  If in addition $v \not\in A_{j+1}$, then $v \in R_{j+1}$, contradicting the definition of monotone.

Conversely, in protocol $J$, if every vertex in $Y_j^1$ is also in $A_{j+1}$ then, once a vertex is treated, it never becomes red. Hence, $J$ is monotone.\end{proof}

In Observation~\ref{non-comprom-time-obs} we noted that 
 a vertex that is  treated at time-step $j$ cannot become compromised until $r+s$ time-steps later.  Thus we can achieve a monotone protocol by treating vertices frequently.  We define what it means for a protocol to be \emph{cautious} in Definition~\ref{cautious-def} and in Theorem~\ref{cautious-implies-monotone} prove that cautious protocols are monotone. Note that the protocols for graphs $H$ and $H'$ in Figure~\ref{fig-caterpillar} are cautious and monotone.

\begin{defn} \rm 
Let $H$ be a graph and suppose that $(A_1, A_2, \ldots, A_N)$ is an $(r,s)$-protocol that clears $H$. We say that this $(r,s)$-protocol is \emph{cautious} if the following holds for all vertices $w \in V(H)$: if the first occurrence of $w$ in a treatment set is in $A_j$ and the last is in $A_k$, then among any $r+s$ consecutive elements of the sequence $A_j, A_{j+1}, \dots,A_k$, at least one must contain $w$.
\label{cautious-def}
\end{defn} 

\begin{thm}
Any cautious $(r,s)$-protocol for graph $H$ is monotone.
\label{cautious-implies-monotone}
\end{thm}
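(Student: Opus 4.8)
The plan is to fix an arbitrary vertex $w$ and track when it can possibly be red. Let $j$ be the first time-step at which $w$ is immunized and $k$ the last, so that $w \in A_j \cap A_k$ while $w \notin A_i$ for $i < j$ or $i > k$. I will argue that $w \notin R_i$ for every $i > j$, which by Definition~\ref{monot-d} establishes monotonicity; the condition at later immunization times $j' > j$ then follows automatically, since $\{i : i > j'\} \subseteq \{i : i > j\}$.

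First I would handle the window $[j,k]$. List the immunization times of $w$ as $j = t_0 < t_1 < \cdots < t_\ell = k$. The cautious condition (Definition~\ref{cautious-def}) forces $t_{m+1} - t_m \le r+s$ for each $m$: otherwise the block $A_{t_m+1}, \dots, A_{t_m+r+s}$ would be $r+s$ consecutive sets inside $A_j, \dots, A_k$, none of which contains $w$, a contradiction. By Observation~\ref{non-contag-time-obs}, immunizing $w$ at time $t_m$ guarantees $w \notin R_i$ for $t_m \le i \le t_m + r + s - 1$; since $t_{m+1} \le t_m + r+s$, this covers $[t_m, t_{m+1}-1]$, and $w$ is green (hence not red) at $t_{m+1}$ itself. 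Taking the union over $m$ shows $w \notin R_i$ for all $i \in [j,k]$.

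The remaining region $(k, N]$ is where the real work lies, and it is the step I expect to be the main obstacle, because the cautious condition says nothing after the last immunization. Here I would instead use the hypothesis that the protocol clears $H$ together with the fact, read off from Table~\ref{tabchart}, that a red vertex stays red until it is immunized. Since $w$ is never immunized after $k$, if $w$ were red at some $i$ with $k < i \le N$ it would remain red through time-step $N$, contradicting $G_N = V(H)$. Hence $w \notin R_i$ for all $i \in (k, N]$ as well.

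Combining the two regions gives $w \notin R_i$ for every $i$ with $j < i \le N$; as the protocol only runs through time-step $N$, there is nothing to check for $i > N$. Since $w$ was arbitrary, the protocol is monotone. An alternative route would verify the criterion of Proposition~\ref{heedful-prop} directly, showing that any $v \in Y_j^1$ lies in $A_{j+1}$, by back-tracking through Table~\ref{tabchart} to locate $v$'s previous immunization at time $j-(r+s)+1$ and then invoking cautiousness; but the direct argument above seems cleaner and isolates the clears-$H$ hypothesis as exactly what is needed beyond cautiousness.
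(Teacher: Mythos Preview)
Your proof is correct and follows essentially the same approach as the paper's: both split into the interval $[j,k]$ (handled via the cautious gap bound together with Observation~\ref{non-contag-time-obs}) and the interval $(k,N]$ (handled via the clears-$H$ hypothesis and the fact that red vertices stay red until immunized). Your version is simply a bit more explicit in listing the immunization times $t_0<\cdots<t_\ell$ and checking the gap bound, but the logic is identical.
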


\begin{proof}
Let $J$ be a cautious $(r,s)$-protocol for graph $H$  with $J = (A_1, A_2, \ldots, A_N)$.  By the definition of cautious we know $J$ clears $H$ and we will show that $J$ is monotone.    Consider $v \in V(H)$ and let the first occurrence of $v$ be in $A_j$ and the last in $A_k$.  By Observation~\ref{non-comprom-time-obs}, we know that $v$ is not red between time-steps $j$ and $k$ because $J$ is cautious.  Since $J$ clears $H$, we also know that $v$ is green at time-step $N$.  If $v$ were red after time-step $k$ then it would need to be treated after time-step $k$ in order to be green at time-step $N$, contradicting our choice of $k$.  Thus $J$ is monotone.
\end{proof}

Not all monotone protocols are cautious.  For example, the $(1,1)$-protocol $(\{a_1\}, \{x\}, \{a_2\},\{x\},\{a_3\},\{a_1\})$ 
for the graph $ K_{1,3}$  as labeled in Figure~\ref{fig:K13} is monotone but not cautious, because $a_1$ is treated at the first time-step and then again 5 time-steps later; however $r+s=2$. As previously discussed, $J'$ is not minimal; that is, it is 
unnecessary to treat vertex $a_1$ in time-step 6.  In Theorem~\ref{minimal-monotone-implies-cautious}  we show that 
protocols that are both monotone and minimal are always cautious.

\begin{thm} If $J$ is a minimal and monotone $(r,s)$-protocol for graph $H$,  then $J$ is cautious.
\label{minimal-monotone-implies-cautious}
\end{thm}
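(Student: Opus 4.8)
The plan is to prove the statement by contradiction: assume $J=(A_1,\dots,A_N)$ is minimal and monotone but \emph{not} cautious, and derive a contradiction. The first task is to unwind the definition of cautious into a statement about a single ``gap.'' If $J$ is not cautious, there is a vertex $w$, first immunized in $A_j$ and last in $A_k$, together with a block of $r+s$ consecutive immunization sets $A_p,\dots,A_{p+r+s-1}$ with $j\le p$ and $p+r+s-1\le k$, none of which contains $w$. Letting $t_1$ be the last immunization of $w$ at or before $p-1$ and $t_2$ the first at or after $p+r+s$, I would check that $t_1<t_2$ are \emph{consecutive} immunizations of $w$ (no immunization of $w$ strictly between them) with $t_2-t_1\ge r+s+1$. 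So it suffices to rule out a gap exceeding $r+s$ between two consecutive immunizations of a single vertex in a minimal monotone protocol.

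Step 1 uses monotonicity alone to control $w$ across the gap. Since $w\in A_{t_1}$ gives $w\in G_{t_1}^r$ and $w$ is not re-immunized until $t_2$, it ages through $G^r,\dots,G^1$ and first reaches $G^1$ at time $t_1+r-1$ (per Definition~\ref{trans}). I claim that on the interval $I=[t_1+r,\;t_2-s-1]$, which is nonempty because $t_2\ge t_1+r+s+1$, the vertex $w$ stays in $G^1$ and has \emph{no} red neighbor. Indeed, if $w$ first turned yellow at some $\tau\in I$, then, since progression to red is neighbor-independent and $w$ is unimmunized until $t_2$, it would be red at $\tau+s\le t_2-1$, contradicting monotonicity; and a $G^1$ vertex stays green precisely when it has no red neighbor, so no red neighbor of $w$ can occur on $I$.

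Step 2 examines $w$ at time $t_2-1$ and produces the contradiction, splitting on its color (it is green or yellow by monotonicity, never red after $t_1$). The key observation I would isolate first is that in a monotone protocol, any vertex red at some time $\sigma$ was never immunized before $\sigma$, so—since immunization is the only way to leave the red state—it has been red at \emph{every} time-step $0,1,\dots,\sigma$. Now if $w$ is yellow at $t_2-1$, it turned yellow at some $\tau$, necessarily $\tau\ge t_2-s$ by Step 1, via a red neighbor $u$; the key observation makes $u$ red throughout $[0,\tau]$, hence red on all of $I$, contradicting Step 1. If instead $w$ is green at $t_2-1$, say $w\in G_{t_2-1}^{\ell}$, then minimality applied to $w\in A_{t_2}$ yields a neighbor $u$ red at some time in $R_{t_2-1+\ell}\cup\cdots\cup R_{t_2-1+r}$, i.e.\ at some time $\ge t_2$; again $u$ is red on all of $I$, contradicting Step 1. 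Either branch contradicts Step 1, so no such gap exists and $J$ is cautious.

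The step I expect to require the most care is Step~1's induction that $w$ genuinely remains in $G^1$ across $I$, tracking the $G^r\to\cdots\to G^1$ aging and checking the endpoints $t_1+r$ and $t_2-s-1$ against the bound $t_2-t_1\ge r+s+1$. The other delicate point is pinning down exactly where minimality is used: only in the green subcase of Step~2, since in the yellow subcase the red neighbor is supplied for free by the yellow-transition rule. The conceptual crux, which makes the whole argument work, is the key observation that monotonicity upgrades a ``red neighbor at a late time'' into a ``red neighbor present since time $0$''; this is what lets a red neighbor appearing near $t_2$ contradict the total absence of red neighbors during the middle of the gap.
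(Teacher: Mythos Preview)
Your proof is correct and follows essentially the same strategy as the paper's: argue by contradiction from a gap of length at least $r+s$ between consecutive immunizations of some vertex $w$, examine $w$ one step before the re-immunization, split into green/yellow cases, extract a red neighbor (via minimality in the green case, via the $G^1\!\to\! Y^s$ transition in the yellow case), and use monotonicity to push that red neighbor back to the middle of the gap to force $w$ red there. Your organization is a bit more explicit---you isolate the ``key observation'' that a red vertex in a monotone protocol has been red since time~$0$, and you package the middle-of-gap analysis as a standalone Step~1---but the underlying argument matches the paper's.
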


\begin{proof}
For a proof by contradiction, suppose $J$ is not cautious. Then $\exists w\in V(H)$ with first occurrence in $A_j$ and last in $A_k$ such that 
$w\not\in A_t\cup A_{t+1}\cup \cdots \cup A_{t+r+s-1}$ and $w\in A_{t+r+s}$, with $j<t$ and $t+r+s-1< k$. 
Because $J$ is monotone, $w$ is never red after time-step $j$, and therefore it is green or yellow at time-step $t+r+s-1$. 
Now $w$ was last treated at or before time-step $t-1$, so if $w$ is green at time $t+r+s-1$, then  $w\in G^1_{t+r+s-1}$. 
Otherwise, $w\in  Y_{t+r+s-1}$. We consider these cases separately. 

If $w\in G^1_{t+r+s-1}$, since $w\in A_{t+r+s}$, then by definition of minimality, $w$ has a neighbor $y\in R_{t+r+s} \cup R_{t+r+s+1}\cup \cdots R_{t+r+s+r-1}$. By monotonicity, vertex $y$ has been red from time-step 1 at least until time-step $t+r+s$. In particular, $y$ is red in the time-steps $t, t+1, t+2, \ldots, t+r+s-1$. Vertex $w$ was last treated at or before time-step $t-1$, so by time-step $t+r-1$, it has no protection remaining, and $w$ becomes yellow due to its red neighbor $y$.
Thus, vertex $w$ turns red $s$ time-steps later, and is red at time-step $t+r+s-1$, because it is not treated in that time period. This contradicts $w\in G^1_{t+r+s-1}$.   

Otherwise, $w\in Y_{t+r+s-1}$. In this case, $w$ turned from green to yellow at most $s$ time-steps previously, 
so it became red at time $t+r$ or later, because it remains yellow for at most $s$ time-steps. Vertex $w$ was last treated at or before time-step $t-1$, and its protective period lasted until time-step $t+r-2$ or sooner. Thus, vertex $w$ turns to yellow due to its red neighbor $y$ 
at time-step $t+r-1$, and thus became red $s$ time-steps later, and is red at time-step $t+r+s-1$. This contradicts $w\in Y_{t+r+s-1}$.\end{proof}

While cautious protocols are monotone (Theorem~\ref{cautious-implies-monotone}), they are not necessarily minimal. For example, the protocol
$J''=(\{a_1\}, \{x\}, \{a_2\},\{x\},\{a_3\},\{a_3\})$  for the graph $K_{1,3}$, labeled as in Figure~\ref{fig:K13},
is cautious but not minimal.
 In Theorem~\ref{upper-bound-thm}, we show that using a cautious protocol $J$ to clear $H$ yields a path decomposition of $H$, and that $width(J) \ge  \lceil (\mbox{$1+$ pathwidth of $H$})/2\rceil$.  Together with Theorem~\ref{minimal-monotone-implies-cautious}, we get the same upper bound for monotone  protocols that are minimal.

\begin{thm} \rm If $J$ is a cautious $(r,s)$-protocol that clears graph $H$, then $width(J) \geq \lceil \frac{1+pw(H)}{r+s} \rceil$.\label{upper-bound-thm} \end{thm}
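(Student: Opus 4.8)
The plan is to convert the protocol $J=(A_1,\dots,A_N)$ directly into a path decomposition of $H$ whose bags are small, and then read off the bound on $width(J)$ from Definition~\ref{pathwidth-def}. Concretely, I would set $B_t = A_t\cup A_{t+1}\cup\cdots\cup A_{t+r+s-1}$ for $1\le t\le N$ (with $A_i=\emptyset$ for $i>N$), so that $B_t$ collects all vertices immunized within a window of $r+s$ consecutive time-steps. Each $B_t$ is a union of $r+s$ immunization sets, so $|B_t|\le (r+s)\,width(J)$; if $(B_1,\dots,B_N)$ is a valid path decomposition then $pw(H)\le \max_t|B_t|-1\le (r+s)\,width(J)-1$, which rearranges to $width(J)\ge (1+pw(H))/(r+s)$, and the ceiling appears because $width(J)$ is an integer. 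So the whole theorem reduces to checking the two path-decomposition axioms of Definition~\ref{path-decomp-def} for this family of bags.

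The consecutiveness axiom (ii) is where cautiousness does the work. For a fixed vertex $w$, let $A_j$ and $A_k$ be its first and last immunization sets (every vertex is immunized at least once, since all vertices start red and a vertex can only leave the red state by being immunized, while $J$ clears $H$). I claim $w\in B_t$ exactly for $t$ in the interval $[\max(1,\,j-r-s+1),\,k]$, which is consecutive. Indeed, $w\in B_t$ iff the window $[t,t+r+s-1]$ meets the set of immunization times of $w$: for $t\le j$ the window contains $j$ as soon as $t\ge j-r-s+1$; for $j\le t\le k$ with the window staying inside $[j,k]$ the defining property of cautious (Definition~\ref{cautious-def}) forces an immunization inside the window; and if the window overruns $k$ it still contains $k$. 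Outside this interval the window misses all immunization times of $w$. This is a short, self-contained verification.

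The main obstacle is axiom (i): every edge $xy\in E(H)$ must lie inside a common bag, equivalently the two index intervals $I_x,I_y$ from the previous step must overlap. Here I would argue by contradiction and lean on monotonicity (Theorem~\ref{cautious-implies-monotone} guarantees $J$ is monotone). If $I_x$ and $I_y$ were disjoint, then, taking $x$ to be the vertex whose last immunization occurs no later than the other's, the first immunization $j_y$ of $y$ satisfies $j_y\ge k_x+r+s$, where $k_x$ is the last immunization of $x$. Since $y$ is immunized for the first time only at $j_y$, it is red at every time-step up to $j_y-1$, and in particular at $k_x+r$. Tracing the status of $x$ after its final immunization at $k_x$, its immunity wanes after $r$ steps to $G^1_{k_x+r-1}$, at which point its red neighbor $y$ drives it to $Y^s_{k_x+r}$; with no further immunization it marches through the yellow classes and becomes red at $k_x+r+s$. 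This contradicts monotonicity, since $x\in A_{k_x}$ but $x\in R_{k_x+r+s}$ with $k_x+r+s>k_x$. Hence $I_x\cap I_y\ne\emptyset$, axiom (i) holds, and the bound follows. The delicate point to get right is the bookkeeping of the exact time-steps in this waning-and-reinfection argument, which is essentially the content of Observation~\ref{non-contag-time-obs}.
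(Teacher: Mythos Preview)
Your proposal is correct and follows essentially the same approach as the paper: the sliding-window bags $B_t=A_t\cup\cdots\cup A_{t+r+s-1}$ are exactly the paper's construction, cautiousness is used in the same way for axiom~(ii), and your contradiction for axiom~(i) is the paper's argument (the paper phrases the contradiction as ``$u$ would be infected by $w$ \ldots\ contradicting that the protocol clears $H$'' rather than invoking monotonicity, but since $k_x$ is the \emph{last} immunization of $x$ these are the same observation). The only cosmetic difference is that you pad out to $N$ bags while the paper stops at $m=N-(r+s)+1$, which is immaterial.
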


\begin{proof} Let $J = (A_1,A_2,\dots,A_N)$ and let $m = N-(r+s)+1$.   We will create a path decomposition of $H$ with bags $B_1,B_2,\dots,B_m$ where each $B_i$ is the union of $r+s$ treatment sets, and thus $|B_i| \leq (r+s)\cdot width(J)$ for $1 \leq i \leq m$.  Thus, \[width(J) \geq \frac{ \max_{1 \leq i \leq m}|B_i|}{r+s} \geq \frac{1+pw(H)}{r+s}.\] Since the width of $J$ is an integer, it suffices to construct the path decomposition. The path decomposition construction in this proof is illustrated below in Example~\ref{path-decomp-ex}.

Define the bags $B_i$ as $B_i = A_i \cup A_{i+1} \cup \dots \cup A_{i+(r+s-1)}$ for $1 \leq i \leq m$.  Note that $A_i$ first appears in $B_{i-(r+s-1)}$ and last appears in $B_i$ (or $B_1$ if $i -(r+s-1)<0$).  It remains to show the bags $B_1,\dots,B_m$ form a path decomposition.

To prove (ii) of Definition~\ref{path-decomp-def}, let $w$ be a vertex in $H$ and let $A_j$ be the first element of $J$ that contains $w$, and $A_k$ be the last element of $J$ that contains $w$.  As noted above, this means $w \in B_{j-(r+s-1)}$ and $w \in B_k$.  Because $J$ is cautious, there is no sequence of $r+s$ consecutive time-steps between $j$ and $k$ in which $w$ is not treated; so $w$ is in every bag from $B_{j-(r+s-1)}$ to $B_k$ (again, or from $B_1$ if $j-(r+s-1)<0$).

Now to prove (i) of  Definition~\ref{path-decomp-def}, 
suppose for a contradiction  that $uw$ is an edge in $H$ and no bag contains $uw$. Without loss of generality, suppose that the first occurrence of $u$ is earlier than the first occurrence of $w$ in $J$. Let the first occurrence of $u$ be in $A_j$ and the last in $A_k$. Then $u$ is in bags $B_{j-(r+s-1)},B_{j-(r+s-2)}, \dots, B_k$ and note that $B_k = A_k \cup A_{k+1} \cup \dots \cup A_{k+(r+s-1)}$.  The lowest-indexed bag that could contain $w$ is $B_{k+1}$, and thus the lowest-indexed treatment set that can contain $w$ is $A_{k+(r+s)}$. Since $u$ was last treated during time-step $k$, at time-step $k+(r+s-1)$, it would have turned yellow due to its red neighbor $w$,
contradicting the fact that the treatment protocol clears $H$.  
\end{proof}

\begin{exa} \rm 
To illustrate the construction  given in the proof of Theorem~\ref{upper-bound-thm}, we start with 
the graph $H$ and a $(1,1)$-protocol that clears it, both of which are shown in Figure~\ref{fig-caterpillar}.  The construction  results in  the path decomposition of $H$ that has the following $12$ bags:  $B_1 = \{a_1,x\}$,
 $B_2 = B_3 = \{a_2,x\}$, $B_4 = B_5 = \{a_3,x\}$, $B_6 = \{x,y\}$, $B_7 = B_8 = \{b_1,y\}$, $B_9 = \{y,z\}$,  $B_{10} = B_{11} = \{c_1,z\}$, $B_{12} = \{c_2,z\}$.
\label{path-decomp-ex}
\end{exa}

We conclude this section with a corollary for graphs $H$ for which there is strict inequality in Theorem~\ref{monotone-upper}.

\begin{cor}\label{cor:cautious-not} Let $H$ be a graph such that $\tau_{r,s}(H)<  \left \lceil \frac{1+pw(H)}{r+s}\right \rceil$. 
 The only $(r,s)$-protocols that clear $H$ and have width $\tau_{r,s}(H)$ are non-cautious $(r,s)$-protocols.
\end{cor}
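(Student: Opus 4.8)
The plan is to argue by contradiction, leaning directly on Theorem~\ref{upper-bound-thm}. Suppose, toward a contradiction, that there exists a cautious $(r,s)$-protocol $J$ that clears $H$ and attains the optimal width, that is, $width(J) = i_{r,s}(H)$. First I would invoke Theorem~\ref{upper-bound-thm}: since $J$ is cautious and clears $H$, that theorem immediately yields $width(J) \geq \left\lceil \frac{1+pw(H)}{r+s} \right\rceil$.

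Next I would combine this lower bound with the standing hypothesis $i_{r,s}(H) < \left\lceil \frac{1+pw(H)}{r+s} \right\rceil$. Chaining the inequalities gives
\[
\left\lceil \frac{1+pw(H)}{r+s} \right\rceil \leq width(J) = i_{r,s}(H) < \left\lceil \frac{1+pw(H)}{r+s} \right\rceil,
\]
which is a contradiction. Hence no cautious protocol can clear $H$ at width $i_{r,s}(H)$, and so every protocol that clears $H$ with width $i_{r,s}(H)$ must be non-cautious, which is exactly the assertion.

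Because the corollary is an immediate logical consequence of Theorem~\ref{upper-bound-thm}, I do not expect any genuine obstacle; all the substantive work has already been carried out in establishing that theorem (and, through Theorem~\ref{minimal-monotone-implies-cautious}, in relating cautious protocols to those that are both minimal and monotone). The only point requiring a moment's care is verifying that the hypotheses of Theorem~\ref{upper-bound-thm} are satisfied by the assumed protocol $J$, namely that $J$ is cautious and clears $H$; both hold by assumption, so the contrapositive reading that optimality of the width forces non-cautiousness goes through without further effort.
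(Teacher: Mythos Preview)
Your argument is correct and mirrors the paper's own proof: both apply Theorem~\ref{upper-bound-thm} to conclude that any cautious protocol has width at least $\left\lceil \frac{1+pw(H)}{r+s} \right\rceil$, which under the hypothesis strictly exceeds $i_{r,s}(H)$, so no optimal-width protocol can be cautious. The only cosmetic difference is that you phrase it as a contradiction whereas the paper states it directly.
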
 

\begin{proof}
By Theorem~\ref{upper-bound-thm}, every cautious $(r,s)$-protocol for $H$ will have width at least   $\tau_{r,s}(H)$.  Thus,  the only way to find an $(r,s)$-protocol that achieves  $\tau_{r,s}(H)<\left \lceil \frac{1+pw(H)}{r+s}\right \rceil$ is to use a non-cautious $(r,s)$-protocol.  
\end{proof}

\section{The treatment number when $r=s=1$}\label{sec:r=s=1}

In the next sections, we provide  results for the case where $r=s=1$. For a graph $H$, we  let  $\tau_{1,1}(H)=\tau(H)$ for the rest of the paper and abbreviate $(1,1)$-protocol by \emph{protocol} and abbreviate $(r,s)$-treatment number by \emph{treatment number}.  For a subset $S \subseteq V(H)$, we define the neighborhood of $S$ to be $N(S) = \{v \in N(x) | x \in S\}$ and note that some vertices of $S$ may be in $N(S)$.

\subsection{Characterization of graphs with treatment number 1}\label{subsec:char1}

If a protocol of width 1 clears  a graph, then  the number of green vertices can be increased from one time-step to the next only when there is at most one   non-green vertex 
that is adjacent to a green  vertex.  This leads to the following necessary condition for graphs $H$ with $\tau(H) = 1$.

\begin{lem}
If $H$ is a graph with $\tau(H) = 1$ then, for every $p$ with $1 \le p \le |V(H)| -1$, there exists $S \subseteq V(H)$ with $|S| = p$ and $| N(S) - S| \le 1$.
\label{treatment-one-lemma}
\end{lem}

\begin{proof}
Let $H$ be a graph with $\tau(H) = 1$ and let $(A_1,A_2,  \ldots , A_N)$ be a protocol that clears $H$ with $|A_i| \le 1$ for each $i$.  
The only way for a vertex to become green  is to be treated, so the number of green vertices increases by at most $1$ at each time-step. This protocol clears $H$, so for $p$ with $1 \le p \le |V(G)| -1$ there is a time-step $j$ in which the number of green vertices increases from $p$ to $p+1$.  
The set $G_j$  consists  of  the  $p$ green vertices at time-step $j$, and they must remain green at time-step $j+1$ while a new vertex is treated.   If there were two vertices in the set  $N(G_j) - G_j$    then at most one of these could be treated at time-step $j+1$ and the other would compromise a vertex of $G_j$, a contradiction.  Thus $| N(G_j)- G_j| \le 1$ and the set $G_j$ is our desired set $S$.\end{proof}

Let $K'_{1,3}$ be the graph obtained by subdividing each edge of $K_{1,3}$ exactly once and let $C_n$ be a cycle on $n \geq 3$ vertices.
We next use Lemma~\ref{treatment-one-lemma} to determine the treatment number of  $C_n$ and $K_{1,3}'$ and then use these results to characterize graphs with treatment number $1$. 
Every vertex of a cycle has two neighbors, so the conclusion of Lemma~\ref{treatment-one-lemma} fails when $p=1$.   Thus $\tau(C_n) \ge 2$.  The following protocol shows that $\tau(C_n) \le 2$.  First treat two adjacent vertices, and at each subsequent time-step, treat the compromised neighbors of the vertices most recently treated.  An alternate proof is to use Theorem~\ref{monotone-upper} and that fact that the pathwidth of $C_n$ is $2$; see \cite{Bo98}. We record this in Observation~\ref{obs-cycle-small-spider}.

In $K'_{1,3}$, every set $S$ of $3$ vertices has $|N(S) - S| \ge 2$.  Therefore, $\tau(K'_{1,3}) \ge 2$ by Lemma~\ref{treatment-one-lemma}.  It is an easy exercise to find a  protocol of width $2$ that clears $K'_{1,3}$. Thus $\tau(K'_{1,3}) = 2$, which we  also record in Observation~\ref{obs-cycle-small-spider}.

 \begin{obs}\label{obs-cycle-small-spider} The treatment number of any cycle is $2$ and the treatment number of $K'_{1,3}$ is $2$. \end{obs}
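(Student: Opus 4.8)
The plan is to establish each equality by proving a matching lower and upper bound: the lower bounds via Lemma~\ref{immunization-one-lemma}, and the upper bounds via either an explicit width-$2$ protocol or a direct application of Theorem~\ref{monotone-upper} using known pathwidths. In both cases the lower bound is the conceptual heart and the upper bound is routine once the right tool is chosen.

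For the cycle $C_n$, the lower bound $i(C_n) \ge 2$ is immediate from Lemma~\ref{immunization-one-lemma} at $p = 1$: every vertex of a cycle has degree $2$, so any singleton $S = \{v\}$ has $|N(S) - S| = 2$, and thus no set of size $1$ satisfies the condition $|N(S) - S| \le 1$; hence $i(C_n) \neq 1$. For the upper bound I would give an explicit protocol of width $2$: immunize two adjacent vertices at time-step $1$, and at each subsequent time-step immunize the (at most two) infected neighbors of the vertices most recently immunized, so the green region grows as two arcs sweeping around the cycle until they meet. One must check this protocol is monotone, i.e.\ that no already-green vertex is reinfected; this holds because each interior green vertex's neighbors are both green, while the only red neighbor of each frontier vertex is itself immunized at the following step. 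Alternatively, since $pw(C_n) = 2$ (see~\cite{Bo98}), Theorem~\ref{monotone-upper} gives $i(C_n) \le \lceil (1+2)/2 \rceil = 2$ with no construction needed.

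For $K'_{1,3}$, write $x$ for the center, $s_1,s_2,s_3$ for the subdivision vertices, and $\ell_1,\ell_2,\ell_3$ for the leaves. The lower bound $i(K'_{1,3}) \ge 2$ comes from Lemma~\ref{immunization-one-lemma} at $p = 3$: I would show that \emph{every} $3$-element subset $S$ satisfies $|N(S) - S| \ge 2$, so the condition of the lemma fails at $p = 3$. Up to the symmetry permuting the three legs, this reduces to checking a handful of representative triples such as $\{x,s_1,s_2\}$, $\{x,s_1,\ell_1\}$, $\{s_1,s_2,s_3\}$, $\{\ell_1,\ell_2,\ell_3\}$, and $\{s_1,\ell_1,s_2\}$, each of which is seen to have at least two neighbors outside $S$; the three-way branching at $x$ is precisely what forces two boundary vertices. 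For the upper bound, $i(K'_{1,3}) \le 2$ follows either from an explicit width-$2$ protocol or, since $K'_{1,3}$ is not a caterpillar and so has pathwidth $2$, from Theorem~\ref{monotone-upper}, which again yields $\lceil (1+2)/2 \rceil = 2$.

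I expect the $p = 3$ case analysis for $K'_{1,3}$ to be the only genuinely delicate step, since one must confirm that no triple can leave just a single boundary vertex; the cleanest way to organize it is to exploit the symmetry among the legs so that only orbit representatives are examined. Verifying monotonicity of the cycle-chasing protocol is the secondary thing to handle with care, but routing both upper bounds through Theorem~\ref{monotone-upper} with the known pathwidths sidesteps the explicit constructions entirely and keeps the argument short.
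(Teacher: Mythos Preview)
Your proposal is correct and mirrors the paper's argument almost exactly: the paper also derives the lower bounds from Lemma~\ref{immunization-one-lemma} (at $p=1$ for $C_n$ and $p=3$ for $K'_{1,3}$) and obtains the upper bounds via the same explicit sweeping protocol for $C_n$ (with the pathwidth route through Theorem~\ref{monotone-upper} noted as an alternative) and by declaring a width-$2$ protocol for $K'_{1,3}$ an easy exercise. Your additional remark that $K'_{1,3}$ has pathwidth $2$ because it is a non-caterpillar tree is a small embellishment not in the paper, but it is correct and harmless.
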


We now characterize those graphs $H$ for which $\tau(H)=1$.  A \emph{caterpillar} is a graph that consists of a path together with  zero or more leaves incident to each vertex of the path.
 
\begin{thm} For a graph $H$ that contains at least one edge, $\tau(H)=1$ if and only if $H$ is a caterpillar.
\label{one-caterpillar}
 \end{thm}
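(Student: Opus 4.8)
The plan is to prove both implications separately, treating the two directions with quite different tools. For the easy direction, suppose $H$ is a caterpillar with at least one edge. I would first observe that a caterpillar has pathwidth exactly $1$: listing the spine as $v_1,\dots,v_k$ and interleaving, for each spine vertex $v_i$, the size-$2$ bags $\{v_i,\ell\}$ (one per leaf $\ell$ adjacent to $v_i$) together with the connecting bag $\{v_i,v_{i+1}\}$ yields a path decomposition of width $1$. Then, since $r=s=1$, Theorem~\ref{monotone-upper} gives
\[
i(H)=i_{1,1}(H)\le \left\lceil \frac{1+pw(H)}{r+s}\right\rceil=\left\lceil \frac{1+1}{2}\right\rceil=1,
\]
and because $H$ has an edge (hence at least two initially red vertices requiring an immunization), $i(H)\ge 1$, so $i(H)=1$.

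For the converse, suppose $i(H)=1$; I would show $H$ must be a caterpillar by ruling out forbidden subgraphs. Observation~\ref{obs-cycle-small-spider} states $i(C_n)=2$ and $i(K'_{1,3})=2$, while Corollary~\ref{subgraph-cor} says the immunization number cannot increase when passing to a subgraph. Consequently, if $H$ contained any cycle $C_n$ as a subgraph, then $i(H)\ge i(C_n)=2$, a contradiction; so $H$ is acyclic, and being connected it is a tree. Likewise, $H$ cannot contain $K'_{1,3}$ as a subgraph, since that would force $i(H)\ge 2$.

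It then remains to verify the structural statement that a tree containing no $K'_{1,3}$ subgraph is a caterpillar; this is the one step I expect to require genuine (though standard) work, and I would prove its contrapositive. If a tree $T$ is not a caterpillar, then deleting all leaves leaves a subtree with a vertex $v$ of degree at least $3$. Each of three neighbors $w_1,w_2,w_3$ of $v$ in this pruned tree is a non-leaf of $T$, so each $w_i$ has a neighbor $u_i\neq v$. Because $T$ is a tree, the three vertices $w_i$ lie in distinct components of $T-v$ and each $u_i$ lies in the component containing $w_i$, so the paths $v\,w_i\,u_i$ share only the vertex $v$; together they form a copy of $K'_{1,3}$ as a subgraph. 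This establishes the claim, and combining it with the previous paragraph shows $H$ is a tree avoiding $K'_{1,3}$, hence a caterpillar, completing the proof. (An alternative to the forbidden-subgraph route would be to extract the needed structure directly from the necessary condition in Lemma~\ref{immunization-one-lemma}, but the subgraph-monotonicity argument via Observation~\ref{obs-cycle-small-spider} is cleaner and is exactly what the preceding results were set up to support.)
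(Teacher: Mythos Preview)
Your proof is correct and follows essentially the same approach as the paper: both directions use the same tools (Theorem~\ref{monotone-upper} with $pw=1$ for caterpillars in one direction, and Observation~\ref{obs-cycle-small-spider} together with Corollary~\ref{subgraph-cor} to exclude cycles and $K'_{1,3}$ in the other). The only difference is that you supply explicit arguments for the pathwidth of a caterpillar and for the ``tree without $K'_{1,3}$ is a caterpillar'' characterization, whereas the paper cites these as known facts.
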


\begin{proof} Let $H$ be a caterpillar. Note that the pathwidth of a caterpillar is one; see \cite{Bo98}.  Now Theorem~\ref{monotone-upper} (with $r=s=1$) implies that  $\tau(H)\leq 1$.  Since $H$ contains at least one edge, $\tau(H)\geq 1$.  Thus, $\tau(H)=1$.

For the other direction, we assume $H$ is a graph that contains at least one edge and $\tau(H)=1$. It is well known  that caterpillars are precisely those trees that have no induced $K'_{1,3}$.  By Observation~\ref{obs-cycle-small-spider}, cycles have treatment number $2$, so by Corollary~\ref{subgraph-cor}, $H$ cannot contain a cycle as a subgraph and hence $H$ is a tree.  Similarly, by Observation~\ref{obs-cycle-small-spider},  the graph $K'_{1,3}$ has treatment number 2, and so it cannot be a subgraph of $H$, and hence $H$ is a caterpillar.
\end{proof}

\subsection{A tool for finding lower bounds of $\tau(H)$}

We generalize Lemma~\ref{treatment-one-lemma} that helped characterize
graphs with  treatment number 1.
For any protocol that clears graph $H$, the number of green vertices must
increase beyond each threshold from 2 to $|V(H)|-1$. Every time there is an
increase, some set $S$ of vertices will remain green without being
treated. The number of non-green   
neighbors of vertices in $S$ is
restricted based on the number of treatments available at the same time-step.
 
\begin{thm}
Let $H$ be a graph.  For each $p: \tau(H) + 1 \le p \le |V(H)| -1$ there exists a set $S \subseteq V(H)$ with $p - \tau(H) + 1 \le |S| \le p$ so that $| N(S) - S|  \le  2 \, \tau(H) -1$.
\label{necessary-condition}
\end{thm}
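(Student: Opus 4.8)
The plan is to extract the desired set $S$ from a single width-$i(H)$ protocol that clears $H$, generalizing the proof of Lemma~\ref{immunization-one-lemma}. Write $c = i(H)$ and fix a protocol $(A_1,\dots,A_N)$ with $|A_t| \le c$ for all $t$ that clears $H$, and let $f(t) = |G_t|$ be the number of green vertices at time-step $t$. Since a vertex can only become green by being immunized, at most $c$ new green vertices appear at each step, so $f(t+1) \le f(t) + c$; also $f(0) = 0$ and $f(N) = |V(H)|$. Given $p$ with $c+1 \le p \le |V(H)|-1$, I let $j$ be the last time-step with $f(j) \le p$; this exists with $j < N$, and then $p < f(j+1) \le f(j)+c \le p+c$, so in particular the green count strictly increases at step $j+1$.

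The main idea --- and the point where the naive generalization of Lemma~\ref{immunization-one-lemma} must be corrected --- is the choice of $S$. Taking $S = G_j$ does not work: a green vertex that is re-immunized at step $j+1$ may have arbitrarily many infected neighbours, so $|N(G_j)-G_j|$ need not be bounded in terms of $c$. Instead I take $S$ to be the set of vertices that remain green from step $j$ to step $j+1$ without being re-immunized, namely
\[ S = G_{j+1} \setminus A_{j+1} = \{\, g \in G_j : g \notin A_{j+1} \text{ and } g \text{ has no neighbour in } R_{j+1} \,\}. \]
By construction $S \subseteq G_j$, so $|S| \le f(j) \le p$. On the other hand $G_{j+1}$ is the disjoint union of $A_{j+1}$ and $S$, so $|S| = f(j+1) - |A_{j+1}| \ge (p+1) - c = p - c + 1$. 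Thus $|S|$ lies in the required range $[\,p-c+1,\,p\,]$.

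It remains to bound $|N(S)-S|$, and here every neighbour of $S$ lying outside $S$ falls into one of two controlled types. Let $L$ be the set of vertices that are green at step $j$, not immunized at step $j+1$, and turn yellow at step $j+1$ (the vertices ``lost'' during the step). I would show $N(S)-S \subseteq A_{j+1} \cup L$ by a short case analysis on the colour at step $j$ of a neighbour $u$ of some $g \in S$: if $u$ is red or yellow at step $j$ then, since $g$ has no red neighbour at step $j+1$, vertex $u$ cannot be red at step $j+1$, forcing $u \in A_{j+1}$; and if $u$ is green at step $j$ but $u \notin S$, then either $u \in A_{j+1}$ or $u$ turned yellow, i.e.\ $u \in L$. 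Finally, because the green count strictly increases at step $j+1$, the identity $f(j+1)-f(j) = |A_{j+1}\setminus G_j| - |L|$ gives $|L| \le |A_{j+1}\setminus G_j| - 1 \le c-1$, so $|N(S)-S| \le |A_{j+1}| + |L| \le c + (c-1) = 2c-1$.

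The one subtlety to get right is the bookkeeping behind the disjointness $G_{j+1} = A_{j+1} \sqcup S$ together with the identity for $f(j+1)-f(j)$, since these simultaneously pin down both the size of $S$ and the loss bound $|L|\le c-1$; the colour case-analysis itself is routine once $S$ has been defined as the set of un-reimmunized survivors rather than as all of $G_j$. I expect the genuine conceptual obstacle to be exactly this identification of $S$: it is what prevents a single re-immunized high-degree vertex from inflating the boundary, and it is the feature that makes the naive ``$S=G_j$'' argument of the $i(H)=1$ case go through for general $i(H)$.
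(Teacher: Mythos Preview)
Your proof is correct and follows essentially the same approach as the paper: both pick a time-step $j$ where the green count crosses $p$, take $S$ to be the vertices of $G_j$ that survive into $G_{j+1}$ without being re-immunized, and bound $|N(S)-S|$ by the size of $A_{j+1}$ plus the number of lost (newly yellow) vertices, the latter being at most $i(H)-1$ because the green count strictly increases. Your packaging via $S = G_{j+1}\setminus A_{j+1}$ and the identity $f(j+1)-f(j)=|A_{j+1}\setminus G_j|-|L|$ is a slightly cleaner bookkeeping of the same partition the paper uses (their $a,b,k$), but the argument is the same.
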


\begin{proof}
Let      $(A_1,A_2,  \ldots , A_N)$ be a protocol that clears graph  $H$ with  $|A_i| \le \tau(H)$ for each $i$.  Thus  $G_N = V(H)$; that is, all vertices are green at time-step $N$.   For any $p$ with $\tau(H) + 1  \le p \le |V(H)| - 1$, there must be a time-step $j$ for which $|G_j| \le p$ and $|G_{j+1}| \ge p+1$. The only way for a vertex to become green is to be treated, so the number of green vertices increases by at most $\tau(H)$ at each time-step and hence $|G_j| \ge 
|G_{j+1}| - \tau(H) \ge p+1 - \tau(H)$.

We partition $G_j$ into three parts: $G_j \cap Y_{j+1}$ (vertices in $G_j$ that become vulnerable at time-step $j+1
$), $G_j \cap A_{j+1}$ (vertices  in $G_j$ that are treated at time-step $j+1$) and the set $S$ of remaining vertices (vertices of $G_j$ that remain green at time-step $j+1$ without being re-treated). 
In addition, let $a = |G_j \cap Y_{j+1}|$, let $b = |G_j \cap A_{j+1}|$, and let   $k = |A_{j+1} \cap \overline{G_j}|$, the number of vertices outside of $G_j$ that are treated at time-step $j+1$.

First we show  the condition $p  - \tau(H) + 1 \le |S| \le p$.  The upper bound follows immediately from  $|S| \le |G_j| \le p$.  For the lower bound, since  $|G_j| \ge  p+1 - \tau(H)$, we can write
 $|G_j| = p-d$ for some  integer $d$ with $0 \le d \le \tau(H) -1$.  Since there are $k$ vertices outside of $G_j$ treated at time-step $j+1$ and $a$ vertices of $G_j$ that become  yellow at time-step $j+1$, we have $|G_{j+1}| = |G_j| + k-a$.  Recall that $|G_{j+1}| \ge p+1$, and hence $k-a = |G_{j+1}| - |G_j| \ge (p+1) - (p-d) = d+1$.  We conclude that $d+a \le k-1$ or equivalently $-d-a \ge -k+1$.  Also note that there are exactly $b+k$ vertices treated at time-step $j+1$, so $b+k \le \tau(H)$, or equivalently, $-b-k \ge -\tau(H)$.   Now we complete the proof of the lower bound on $|S|$ as follows:
 $$|S| = |G_j| - b - a = p - d - b - a = p - b  + (-d -a) \ge p - b + (-k + 1) = p + 1 + (-b-k) \ge p+1 -\tau(H).$$

\medskip

It remains to show the inequality $| N(S) - S|  \le  2 \, \tau(H) -1$.  
By construction,  $|G_j| = |S| + a + b$.  
By definition, the vertices of $S$ remain green at time-step $j+1$ so their only neighbors can be other vertices in $G_j$ and the $k$ vertices outside of $G_j$ that are treated at time-step $j+1$, because all other vertices are compromised.  Thus $| N(S) - S| \le a + b  + k$.  However, $k+b = |A_{j+1}| \le \tau(H)$.  Combining these we obtain the following: $|N(S) - S| \le  a + b  + k \le a + \tau(H).$ 

Recall that $|G_{j+1}| \ge p + 1 \ge |G_j| + 1$; that is, there are more green vertices at time-step $j+1$ than at time-step $j$. There are $a$ newly yellow vertices, so there must be more than $a$ vertices that are treated at time-step $j+1$, and thus $ |A_{j+1}| \ge a+1$.  Therefore, $\tau(H) \ge|A_{j+1}| \ge a+1$ and we obtain 
$| N(S)- S| \le a + \tau(H) \le (\tau(H) - 1) + \tau(H)   = 2\tau(H) -1$, as desired.
\end{proof}

As a special case, if $H$ is a graph with $\tau(H) = 2$ then for every $p$ with $2 \le p \le |V(H)| -1$ there exists $S \subseteq V(H)$ with $|S| = p-1$ or $p$ and $| N(S)-S| \leq 3$.  The next two results use this special case of Theorem~\ref{necessary-condition}.

Recall that the Petersen graph can be defined as the graph whose vertices are  the $2$-element subsets of a $5$-element set and where two vertices are adjacent precisely when their corresponding subsets are disjoint.

 \begin{prop}\label{prop:peterson} 
 The Petersen graph has treatment number $3$.
\end{prop}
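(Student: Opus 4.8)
The plan is to sandwich $i(P)$ between $3$ and $3$, where $P$ denotes the Petersen graph. For the upper bound $i(P)\le 3$, I would invoke Theorem~\ref{monotone-upper} with $r=s=1$, which gives $i(P)\le \lceil (1+pw(P))/2\rceil$. The Petersen graph has treewidth $4$ and pathwidth $5$, so $pw(P)\in\{4,5\}$, and in either case $\lceil (1+pw(P))/2\rceil = 3$; hence $i(P)\le 3$. (Alternatively, one can read off an explicit width-$3$ protocol from a path decomposition of width $5$, exactly as in the proof of Theorem~\ref{monotone-upper}.) Thus all the difficulty lies in the matching lower bound.

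For the lower bound I would argue by contradiction. First note $i(P)\ge 2$: since $P$ is not a caterpillar (it is not even a tree), Theorem~\ref{one-caterpillar} forces $i(P)\ne 1$; equivalently, $P$ contains a cycle, so $i(P)\ge 2$ by Observation~\ref{obs-cycle-small-spider} and Corollary~\ref{subgraph-cor}. Now suppose $i(P)=2$. Applying the $i(H)=2$ special case of Theorem~\ref{necessary-condition} with $p=4$ yields a set $S\subseteq V(P)$ with $|S|\in\{3,4\}$ and $|N(S)-S|\le 3$. I would then derive a contradiction by proving the stronger fact that \emph{every} set $S$ with $2\le |S|\le 5$ satisfies $|N(S)-S|\ge 4$; in particular sizes $3$ and $4$ are impossible, so $i(P)\ne 2$ and hence $i(P)\ge 3$.

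The crux, and the main obstacle, is this boundary estimate, which I would obtain from the structure of $P$: it is $3$-connected and has girth $5$ (equivalently, strongly regular with $\lambda=0,\ \mu=1$, so adjacent vertices share no common neighbor and non-adjacent vertices share exactly one). Given $S$ with $2\le |S|\le 5$ and $|N(S)-S|\le 3$, set $C=N(S)-S$ and $I=V(P)\setminus(S\cup C)$. Then $|I|\ge 10-5-3=2$, there are no edges between $S$ and $I$, and $C$ is a vertex cut of size at most $3$ with both sides of size at least $2$. By $3$-connectivity $|C|=3$, so it remains to rule out a $3$-cut whose two sides each have at least two vertices. Letting $A$ be the smaller side (so $2\le |A|\le 3$), a short edge count using girth $5$ shows $A$ already has at least four external neighbors: an edge has $4$ distinct external neighbors (no common neighbor), a non-adjacent pair has $5$ (only one common neighbor), and every $3$-set likewise has at least $4$ external neighbors — for instance an independent $3$-set with only $3$ external neighbors would send all $9$ boundary edges into $3$ degree-$3$ vertices, forcing a $K_{3,3}$ and hence a $4$-cycle. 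This exceeds $|C|=3$, the desired contradiction; consequently every $3$-cut of $P$ isolates a single vertex.

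Combining the two bounds gives $i(P)=3$. I expect the only delicate point to be the case analysis of the boundary estimate for $|S|=3$: the girth-$5$ conditions (no triangle, no $4$-cycle) together with strong regularity are exactly what prevent these external neighborhoods from collapsing below size $4$, and the remainder of the argument is routine bookkeeping with the cut $C=N(S)-S$.
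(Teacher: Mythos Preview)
Your argument is correct, but both halves differ from the paper's proof. For the upper bound, the paper simply exhibits an explicit width-$3$ protocol in five time-steps (using the $2$-subset labeling of $V(P)$), avoiding any appeal to the pathwidth of the Petersen graph; your route via Theorem~\ref{monotone-upper} is valid but imports the nontrivial fact $pw(P)\le 5$, which you would still need to justify. For the lower bound, the paper applies Theorem~\ref{necessary-condition} with $p=3$ (not $p=4$), so it only needs to check that every $2$-set has $|N(S)-S|\ge 4$ and every $3$-set has $|N(S)-S|\ge 5$, which it dispatches by direct inspection. Your approach is more structural: you recast $|N(S)-S|\le 3$ as a $3$-cut with both sides of size $\ge 2$, then use $3$-connectivity and girth~$5$ to rule this out. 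This is a genuinely different (and nicely conceptual) route; it explains \emph{why} the boundary cannot collapse rather than just verifying it, and in effect proves that every minimum vertex cut of $P$ isolates a single vertex. The trade-off is that your case analysis for $3$-sets is only sketched (``for instance'' covers the independent case; the path and edge-plus-isolated-vertex cases are equally easy but should be stated), whereas the paper's smaller choice of $p$ makes the verification shorter.
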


\begin{proof} 
Let $H$ be the Petersen graph and  let $V(H)$ be the set of $2$-element subsets of $\{1,2,3,4,5\}$.  
  One can check that the following treatment protocol clears $H$:
$A_1 = \{12, 34, 35\}$, \ $A_2 = \{15, 25, 45\}$, \ $A_3 = \{13, 23, 35\}$, \ $A_4 = \{14, 24, 25\}$, \ $A_5 = \{15, 23, 34\}$.   Thus
 $\tau(H) \le 3$. 

It remains to show $\tau(H) > 2$.   We know that $\tau(H) \neq 1$ by Theorem~\ref{one-caterpillar}. Suppose for a contradiction that $\tau(H) = 2$.  By Theorem~\ref{necessary-condition}, we know that for each $p:3 \le p \le 9$, there exists $S\subseteq V(H)$ with $p-1 \le |S| \le p$ so that $|N(S) - S|  \le  3$.  Consider $p = 3$.  For all $S \subseteq V(H)$ with 
$|S| = p-1 = 2$ one can check that $| N(S) - S| \ge 4 >  3$.  Likewise, for all $S \subseteq V(H)$ with 
$|S| = p = 3$ one can check that $| N(S) - S| \ge 5 >  3$.  This is a contradiction.\end{proof}

For a set $S$, the set $N(S)-S$ represents the \emph{boundary}  of $S$; that is, the expression represents the neighbors of $S$ that are not in $S$.  This quantity has also been studied in the context of the vertex isoperimetric value of a graph; see the surveys~\cite{bezrukov,leader}. We combine 
Theorem~\ref{necessary-condition} with the known isoperimetric value of the grid to find the treatment number of the grid.

\begin{defn}
Let $H$ be a graph. 
For $k \in \{1,2,\dots,|V(H)|\}$, the vertex isoperimetric value of graph $H$ with integer $k$ is denoted $\Phi_V(H,k)$, and is defined as $$\Phi_V(H,k) = \min_{S \subseteq V(H): |S|=k} |N(S)-S|.$$ 
\end{defn}
Consider, for example, $P_4 \square P_4$, as shown in Figure~\ref{fig:4x4}. The interested reader can verify that for any set $S$ of eight vertices, there are at least four vertices not in $S$ that have a neighbor in $S$; that is, $|N(S)-S| \geq 4$.  Since we can find a set of eight vertices which have exactly four neighbors not in the set, $\Phi_V(P_4 \square P_4,8)=4$.

\begin{thm} For $n \geq 3$, $\tau(P_n \square P_n) = \lceil \frac{n+1}{2}\rceil$.
\label{i-grids}
\end{thm}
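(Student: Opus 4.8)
The plan is to prove two matching bounds, both equal to $\lceil (n+1)/2\rceil$. For the upper bound I would apply Theorem~\ref{monotone-upper} with $r=s=1$, which gives $i(P_n\square P_n)\le\lceil(1+pw(P_n\square P_n))/2\rceil$. Since it is well known that $pw(P_n\square P_n)=n$ (see~\cite{Bo98}), this immediately yields $i(P_n\square P_n)\le\lceil(n+1)/2\rceil$. So the whole content of the theorem lies in the lower bound.

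For the lower bound I would show $i(P_n\square P_n)\ge\lfloor n/2\rfloor+1=\lceil(n+1)/2\rceil$ by ruling out every value $c\le\lfloor n/2\rfloor$. Suppose for contradiction that $i(P_n\square P_n)=c$ with $c\le\lfloor n/2\rfloor$, so that $2c\le n$ and $2c-1<n$. Theorem~\ref{necessary-condition} asserts that for each admissible $p$ there is a set $S$ with $p-c+1\le|S|\le p$ and $|N(S)-S|\le 2c-1$; equivalently, the minimum vertex boundary satisfies $\Phi_V(P_n\square P_n,|S|)\le 2c-1$ for some cardinality $|S|$ lying in the length-$c$ window $[p-c+1,p]$. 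Thus to derive a contradiction it suffices to exhibit a single window of $c$ consecutive cardinalities on which $\Phi_V(P_n\square P_n,k)\ge n$ (hence $\ge 2c>2c-1$), and then take $p$ to be the top of that window.

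Such a window is supplied by the known vertex-isoperimetric profile of the grid (see the surveys~\cite{bezrukov,leader}): the sets minimizing the vertex boundary are the initial segments of the diagonal (simplicial) order, whose boundary is an anti-diagonal, and a direct computation shows that the maximum value of $\Phi_V(P_n\square P_n,\cdot)$ equals $n$ and is attained on a central plateau of cardinalities (roughly $\binom{n-1}{2}<k\le\binom{n+1}{2}-1$, about $2n$ consecutive values). This plateau comprises at least $\lfloor n/2\rfloor\ge c$ consecutive cardinalities, so it contains a window of $c$ consecutive values $k$ on which $\Phi_V(P_n\square P_n,k)=n$; moreover this window sits near $n^2/2$, comfortably inside the admissible range $[c+1,\,n^2-1]$. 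Choosing $p$ to be the top of this window, every set $S$ produced by Theorem~\ref{necessary-condition} satisfies $|N(S)-S|\ge\Phi_V(P_n\square P_n,|S|)=n>2c-1$, a contradiction. Hence no $c\le\lfloor n/2\rfloor$ can equal the immunization number, giving $i(P_n\square P_n)\ge\lceil(n+1)/2\rceil$; combined with the upper bound this is equality. (The example $\Phi_V(P_4\square P_4,8)=4=n$ quoted earlier is exactly an instance of the plateau value.)

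The routine parts are the arithmetic identities $\lfloor n/2\rfloor+1=\lceil(n+1)/2\rceil$ and $2c\le n$, and checking that a width-$c$ window fits inside the plateau and inside the admissible range. The main obstacle is the isoperimetric input: one must either cite a precise statement of the vertex-isoperimetric inequality for $P_n\square P_n$ or establish the lower bound $\Phi_V(P_n\square P_n,k)\ge n$ on the central window directly. The delicate direction is this lower bound---that no cleverly shaped set of central cardinality has vertex boundary smaller than $n$---since the matching upper bound (a corner staircase or a band of full rows achieving boundary exactly $n$) is easy. In writing the proof I would take care to confirm that the cited theorem is stated for the Cartesian-product (lattice) adjacency used here, and to pin down the plateau endpoints precisely enough that the window can be placed with room to spare.
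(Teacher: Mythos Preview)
Your proposal is correct and follows essentially the same route as the paper: the upper bound via Theorem~\ref{monotone-upper} and $pw(P_n\square P_n)=n$, and the lower bound by combining Theorem~\ref{necessary-condition} with the vertex-isoperimetric plateau $\Phi_V(P_n\square P_n,k)=n$ on the central range of cardinalities. The paper makes this last step precise by citing the explicit result (from~\cite{bela1}, stated in~\cite{one-vis}) that $\Phi_V(P_n\square P_n,k)=n$ for $\frac{n^2-3n+4}{2}\le k\le \frac{n^2+n-2}{2}$ and taking $p=\frac{n^2+n-2}{2}$; your qualitative plateau $\binom{n-1}{2}<k\le\binom{n+1}{2}-1$ is exactly this interval, so the arguments coincide once the citation is pinned down.
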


\begin{proof} Assume $n \geq 3$.  It is well known that $pw(P_n \square P_n)=n$ (see~\cite{Bo98}).  By Theorem~\ref{monotone-upper}, $\tau(P_n \square P_n) \leq \lceil \frac{n+1}{2}\rceil$.  Thus, $\tau(P_n \square P_n) = \lceil \frac{n+1}{2}\rceil - \ell$ for some integer $\ell \in \{0,1,\dots,\lceil \frac{n-1}{2}\rceil\}$.\medskip

Let $p = \frac{n^2+n-2}{2}$ and observe that $\lceil \frac{n+1}{2}\rceil - \ell + 1 \leq p \leq n^2-1$ because $n \geq 3$.  Thus, by Theorem~\ref{necessary-condition}, there exists a set $S \subseteq V(P_n \square P_n)$ with 
\[p-\left(\left \lceil \frac{n+1}{2}\right \rceil-\ell\right)+1 \leq \;|S| \;\leq p\]
so that $|N(S)-S| \leq 2(\lceil \frac{n+1}{2}\rceil-\ell)-1$.  We simplify the latter inequality: \begin{equation}\label{eqn:NS} |N(S)-S| \leq 2\Big\lceil \frac{n+1}{2}\Big\rceil-2\ell-1 = \begin{cases} n-2\ell & \text{ if $n$ is odd,} \\ n+1-2\ell & \text{ if $n$ is even.}\end{cases}\end{equation}

To complete the proof, we use the following result, proven in~\cite{bela1} and explicitly stated in~\cite{one-vis}: $$\Phi_V(P_n \square P_n,k) = n \text{~~for~~} \frac{n^2-3n+4}{2} \leq k \leq \frac{n^2+n-2}{2}.$$  Consequently, $$|N(S)-S| \geq n \text{~~for~~} \frac{n^2-3n+4}{2} \leq |S| \leq \frac{n^2+n-2}{2}.$$ But this contradicts the inequality in~(\ref{eqn:NS}) unless $\ell=0$.  Thus, for $n \geq 3$, $\tau(P_n \square P_n) = \lceil \frac{n+1}{2}\rceil$.
\end{proof}

\section{Subdivisions when $r=s=1$}\label{sec:subdivisions}

The main result of this section is proving that every tree has a subdivision that has treatment number at most 2. We begin with $P_4 \square P_4$, to illustrate how subdividing an edge can change the treatment number. 
By Theorem~\ref{i-grids}, $\tau(P_4 \square P_4)=3$. By subdividing one edge, of $P_4 \square P_4$, we show that the resulting graph has treatment number 2. 

\begin{exa}
There is a subdivision of $P_4 \square P_4$  with treatment number 2.  \end{exa}

\begin{proof}
Let  $ V = V(P_4 \square P_4) = \{a,b,c,d,e,f,g,h,i,j,k,\ell,m,n,o,p\}$, as in Figure~\ref{4disj}. Let $H$ be $P_4 \square P_4$ with the edge $\{e,i\}$ replaced by a path $P$ with 30 interior vertices, $w_1, w_2, \ldots, w_{30}$ from $i$ to $e$ (i.e., $w_{30}$ is adjacent to $e$).  We construct a treatment protocol for $H$ beginning with the interior vertices of $P$ as follows:
$A_1 = \{w_1,w_2\}$, $A_2 = \{w_3,w_4\}$,  \ldots, $A_{15} = \{w_{29},w_{30}\}$.

Then we clear the vertices of $V$  using  the treatment sets  $A_{16}$ through $A_{30}$ in the table below. This table also shows which of these vertices is green during  time-steps 16 through 30.

\vspace{.05in} 

\begin{tabular}{c|ccccccccccccccc}
$t$ & 16 & 17 & 18 & 19 & 20 & 21 & 22 & 23 & 24 & 25 & 26 & 27 & 28 & 29 & 30  \\ \hline 
$A_t$ & $e,f$ & $a,b$ & $c,f$ & $d,h$ & $c,f$ & $g,h$ & $f,\ell$ & $g, k$ & $f,\ell$ & $j,k$ & $\ell, p$ & $j,k$ & $p, o$ & $j, n$ & $i,m$ \\  \hline
$G_t \cap V$ & $e,f$ & $a,b$ & $c,f$ & $d,h$ & $c,f$ & $g,h$ & $f,\ell$ & $g, k$ & $f,\ell$ & $j,k$ & $\ell, p$ & $j,k$ & $p, o$ & $j, n$ & $i,m$ \\ 
 & & $e$  & $a,b$ & $a,b$  & $a,b$& $a,b$ & $a,b$ & $a,b$ & $a,b$ & $a,b$ & $a,b$ & $a,b$ &$a,b$ & $a,b$ & $a,b$ \\
  & &      & $e$ & $e$ & $d,e$ & $c,d$ & $c,d$ & $c,d$ & $c,d$ & $c,d$ & $c,d$ & $c,d$ & $c,d$ & $c,d$ & $c,d$ \\
    & &      &                      &  &  & $e$ & $e,h$ & $e,h$ & $e,g$ & $e,f$ & $e,f $& $e,f$& $e,f$ & $e,f$ & $e,f$ \\
        & &      &                      &  &  &  &               &  & $h$ & $g,h$  & $g,h$ & $g,h$ & $g,h$& $g,h$ & $g,h$ \\
                & &      &                      &  &  &  &               &  &                   &   & & $\ell$  &$k,\ell$ & $k, \ell$ & $j,k$\\
                                & &      &                      &  &  &  &               &  &                   &   & &     &     & $o,p$ & $\ell, n$\\
                                  & &      &                      &  &  &  &               &  &                   &   & &     &     & & $o,p$\\

\end{tabular}

\vspace{.05in}

Although $w_1$ and $w_2$ are green at  time-step 1, $w_1$ is adjacent to $i$ and is corrupted 
during time-step 2.  During time-step 3, $w_1$ turns red and corrupts  
$w_2$. Similarly, during time-step 4, $w_2$ turns red and corrupts 
$w_3$. Iteratively,  for $3\leq j\leq 30$, during time-step $j$,  vertex $w_{j-2}$ turns red and corrupts 
$w_{j-1}$.  Thus, during time-step $30$, $w_{28}$ turns red and corrupts 
$w_{29}$, while $w_{30}$ remains green at time-step $30$. The vertex $i$ is treated during time-step 30 and  all the vertices in $V$ remain green, since $e$ is still green. During time-step 31, we treat $w_{29}$ and $w_{28}$. During time-step 32, we treat $w_{27}$ and $i$. We proceed in the same manner to treat the rest of the interior vertices of $P$, moving towards $i$, and treating $i$ every other time-step to keep $i$ from corrupting  
any other vertex. When we complete the treatment of the vertices on $P$, every vertex in $H$ is green.\end{proof} 

\begin{figure}[hb] 

\begin{center}
\begin{tikzpicture}[scale=.9]

\draw[black](0,1)--(0,0)--(1,0);
\draw[black](0,2)--(0,3)--(1,3);
\draw[black](2,3)--(2,2)--(3,2);
\draw[black](2,0)--(2,1)--(3,1);

\draw[black](1,1)--(1,2);
\draw[black](1,1)--(0,1)--(0,2)--(1,2);
\draw[black](1,1)--(2,1)--(2,2)--(1,2);
\draw[black](1,1)--(1,0);
\draw[black](2,0)--(3,0)--(3,1)--(3,2)--(3,3)--(2,3)--(1,3)--(1,2);
\draw[ultra thick,gray,decorate,decoration={coil,aspect=0,segment length=3mm}] (1,0) -- (2,0);

\filldraw[darkgray]
(0,0) circle [radius=3pt]
(1,0) circle [radius=3pt]
(2,0) circle [radius=3pt]
(3,0) circle [radius=3pt]

(0,1) circle [radius=3pt]
(1,1) circle [radius=3pt]
(2,1) circle [radius=3pt]
(3,1) circle [radius=3pt]

(0,2) circle [radius=3pt]
(1,2) circle [radius=3pt]
(2,2) circle [radius=3pt]
(3,2) circle [radius=3pt]

(0,3) circle [radius=3pt]
(1,3) circle [radius=3pt]
(2,3) circle [radius=3pt]
(3,3) circle [radius=3pt]
;

\node at (-.3,0.2) {$a$};
\node at (-.3,1.2) {$b$};
\node at (-.3,2.2) {$c$};
\node at (-.3,3.2) {$d$};

\node at (.7,0.2) {$e$};
\node at (.7,1.2) {$f$};
\node at (.7,2.2) {$g$};
\node at (.7,3.2) {$h$};

\node at (1.75,0.3) {$i$};
\node at (1.7,1.2) {$j$};
\node at (1.7,2.2) {$k$};
\node at (1.7,3.2) {$\ell$};

\node at (2.7,0.2) {$m$};
\node at (2.7,1.2) {$n$};
\node at (2.7,2.2) {$o$};
\node at (2.7,3.2) {$p$};

\end{tikzpicture}
\end{center}

\caption{The $4\times 4$ grid $P_4 \square P_4$ with one  edge  subdivided (indicated by a wavy line).} 

\label{4disj}

\end{figure}

We next provide a construction that puts together two subgraphs, each of which can  be cleared with 2 treatments at each time-step, to get a larger graph that can also be cleared with 2 treatments at each time-step.  We use this construction primarily for trees. Let $T_{m,d}$ be the complete $m$-ary tree of depth $d$ and $\hat{T}_{m,d}$ be the tree obtained by attaching a leaf to the root of $T_{m,d}$. We call this added leaf the \emph{stem} of $\hat{T}_{m,d}$.   Figure~\ref{fig-depth-3-tree} illustrates how to construct  a subdivision of $\hat{T}_{2,3}$ from subdivisions of two copies of $\hat{T}_{2,2}$. In our proof, we construct a protocol that can be used for the larger tree, using the protocols of the smaller trees. 

 \begin{thm} \label{binary}
For $1\leq k\leq 2$, let $H_k$ be a graph with a leaf  $\ell_k$. Let $J_k$ be a protocol of width 2 that clears $H_k$, takes $s_k$ time-steps, and for which   $\ell_k $ is treated only in the last time-step. Let $H$ be the graph that consists of $H_1+H_2$, a new path between $\ell_1$ and $\ell_2$ with $s_1$ new interior vertices, and a new leaf $\ell$ adjacent to $\ell_1$.  Then there is a protocol  of width 2 that clears $H$  in  at most $s_2+3s_1$ time-steps and for which  $\ell$ is treated only in the last time-step.
\end{thm}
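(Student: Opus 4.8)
The plan is to build the protocol for $H$ in four phases that sweep across $H$ from the $\ell_2$-side to the $\ell_1$-side, spending $s_2$ steps on $H_2$ and $s_1$ steps on each of the three remaining stages, for a total of $s_2+3s_1$. Label the path from $\ell_2$ to $\ell_1$ as $p_0=\ell_2, p_1,\dots,p_{s_1}, p_{s_1+1}=\ell_1$, so that $p_1,\dots,p_{s_1}$ are the $s_1$ interior vertices, with $p_1$ adjacent to $\ell_2$ and $p_{s_1}$ to $\ell_1$. Throughout I would use the elementary \emph{sweep} fact: if a vertex is immunized at some step and its only red neighbor is immunized at the very next step, then the first vertex keeps no red neighbor and stays green; consequently a path can be cleared monotonically one vertex per step (this is the same mechanism that gives $i(P_n)=1$).

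In \emph{Phase 1} (steps $1,\dots,s_2$) I run $J_2$ on the copy of $H_2$. Since $H_2$ meets the rest of $H$ only at $\ell_2$, and $\ell_2$ stays red until the last step of $J_2$ regardless of its extra red neighbor $p_1$, the sub-protocol behaves exactly as it does on $H_2$ alone, so all of $H_2$ is green at step $s_2$ with $\ell_2\in G_{s_2}^1$. In \emph{Phase 2} (steps $s_2+1,\dots,s_2+s_1$) I sweep the path, immunizing $p_1,p_2,\dots,p_{s_1}$ one per step; immunizing $p_1$ at step $s_2+1$ is just in time to keep $\ell_2$ from ever seeing a red neighbor, and the sweep fact keeps every $p_i$ (and hence all of $H_2\cup\{\ell_2\}$) green. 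At the end of Phase 2 the whole path is green, while $H_1$, $\ell_1$, and $\ell$ are still red.

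The crux is \emph{Phase 3} (steps $T+1,\dots,T+s_1$, where $T=s_2+s_1$), in which I run $J_1$ on $H_1$. As in Phase 1 this clears $H_1$ and immunizes $\ell_1$ exactly at step $M:=T+s_1$. The difficulty is that $\ell_1$ remains red for all of Phase 3 except its last step, so it drives an infection wave back along the now-green path; a direct induction shows $p_{s_1-k}$ turns yellow at step $T+1+k$ and red at $T+2+k$. The reason for giving the path exactly $s_1$ interior vertices is that at step $M$ this wave has reached only $p_2,\dots,p_{s_1}$ (red), with $p_1$ just turning yellow, so $\ell_2$ — and therefore all of $H_2$ — is still green and untouched. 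Verifying that the wave advances at one vertex per step and cannot reach $\ell_2$ before $H_1$ is finished is the main obstacle, and it is exactly what the path length $s_1$ is tuned for.

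Finally, in \emph{Phase 4} (steps $M+1,\dots,M+s_1$) I clean up the wave while protecting $\ell_1$. At step $M+1$ I immunize $p_1$ (catching it before it turns red, which permanently shields $\ell_2$) together with $\ell_1$, which must be re-immunized every step because its leaf-neighbor $\ell$ is red and $\ell$ may be immunized only at the very end. Continuing, at step $M+j$ I immunize $p_j$ and $\ell_1$, and the sweep fact keeps the cleaned prefix green. The single off-by-one in the step count is absorbed at the final step $M+s_1$, where I immunize $p_{s_1}$ and $\ell$ simultaneously: this clears $\ell$, and since $\ell_1$'s only remaining red neighbors were precisely $p_{s_1}$ and $\ell$, it stays green without being re-immunized. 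At this point every vertex of $H$ is green, the width never exceeds $2$, $\ell$ is immunized only at the last step, and the total is $s_2+3s_1$ steps, as required. What then remains is the routine transition bookkeeping at the phase boundaries — in particular confirming that $H_2\cup\{\ell_2\}$ is never reinfected and that each claimed $G^1$, $Y^1$, and $R$ status holds — which I expect to be straightforward once the wave timing of Phase 3 is pinned down.
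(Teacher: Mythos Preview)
Your proposal is correct and follows essentially the same four–phase strategy as the paper: clear $H_2$, sweep the path toward $\ell_1$, run $J_1$ on $H_1$ while an infection wave drifts back along the path (without reaching $\ell_2$, which is exactly what the $s_1$ interior vertices are tuned for), then sweep the path a second time while protecting $\ell_1$, immunizing $\ell$ only at the final step. The only differences are cosmetic: the paper also immunizes $\ell_1$ at the end of Phase~2 (so its wave starts one step later and only $s_1-1$ path vertices get reinfected), and in Phase~4 it re-immunizes $\ell_1$ only every other step rather than every step; neither change affects the structure or the $s_2+3s_1$ bound.
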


\begin{proof} Let $P$ be the new path between $\ell_1$ and $\ell_2$ with $s_1$ new interior vertices.  Our protocol for clearing $H$ will first clear $H_2$, then the interior of  $P$ and $\ell_1$, then $H_1$, then  the vertices on $P$ that have changed color, and finally vertex $\ell$.  In this protocol, once the graph $H_2$ is cleared, its vertices will remain green. 
We begin by clearing $H_2$ in $s_2$ time-steps by following  protocol $J_2$, with $\ell_2$ treated only  in the last time-step. Since $\ell_2$ is cleared last, it is green at time-step $s_2$. Then clear the interior vertices of $P$ and $\ell_1$, in order from $\ell_2$ to $\ell_1$,  using $s_1$ time-steps. Note that we can clear these vertices with our 2 treatments per time-step in $\lceil\frac{s_1+1}{2}\rceil$ time-steps, but we count it as $s_1$ time-steps to simplify the calculation.  

Next, clear $H_1$ in $s_1$ time-steps by following  protocol $J_1$, with $\ell_1$ treated  only in the last time-step.   The neighbor of $\ell_1$ on $P$ turns yellow at time-step $s_2+s_1+2$, and red at time-step $s_2+s_1+3$. It changes the color of the vertex adjacent to it  on $P$ at  time-step $s_2+s_1+3$, and the process of 
color change to yellow and then red continues, one vertex at each time-step, in order from $\ell_1$ towards $\ell_2$.  We start to clear $H_1$ at time $s_2+s_1+1$ and finish at time $s_2+s_1+s_1$, with $\ell_1$ being treated in this last time-step. At most $s_1-1$ vertices on $P$ turn red or yellow.  Since $P$ has $s_1$ interior vertices, during the $s_1$ time steps of protocol $J_1$, vertex $\ell_2$ and its neighbor on $P$ remain green.
The last series of treatments are to treat the red and yellow vertices on $P$, starting at the end near $\ell_2$ and moving towards $\ell_1$, and treating $\ell_1$ every other time-step to prevent $\ell_1$ from becoming red. Because we have two treatments at each time-step, we can treat both $\ell_1$ and a vertex on $P$ in the same time-step.  Thus we can clear $P$  in  $s_1-1$ time steps, starting at time-step
$s_2+2s_1+1$, and finishing at time-step $s_2+2s_1+(s_1-1)$. We add one additional time-step to  treat $\ell$ and $\ell_1$. The total number of  time-steps is at most $s_2+3s_1$ and $\ell$ is treated only at the last time-step.
\end{proof} 

\begin{figure}[htb] \label{tree-fig} 
\begin{center}
\begin{tikzpicture}[scale=.8]
\draw[thick] (0,0)--(.5,1)--(1,0); 
\draw[thick] (2.5,-.5)--(3,.25)--(3.5,-.5); 
\draw[thick] (.5,1)--(2,2); 
\draw[thick] (2,2)--(2,3); 
\draw[thick] (3,.8)--(3,.25);
\draw[ultra thick,gray,decorate,decoration={coil,aspect=0,segment length=4mm}] (2,2) -- (3,.8);
\filldraw[blue]

(3,.8) circle [radius=3pt]

(2,2) circle [radius=3pt]
(2,3) circle [radius=3pt];

\filldraw[black]
(0,0) circle [radius=3pt]
(1,0) circle [radius=3pt]
(0.5,1) circle [radius=3pt]
(2.5,-.5) circle [radius=3pt]
(3,.25) circle [radius=3pt]
(3.5,-.5) circle [radius=3pt];

\node(ell) at (1.7,3.2) {$\ell$};
\node(ell1) at (1.7,2.3) {$\ell_1$};
\node(ell2) at (3.3,1.1) {$\ell_2$};

\node(a2) at (2,-2) {(a)};

\draw[thick] (7,0)--(7.5,1)--(8,0); 
\draw[thick] (10,0)--(10.5,1)--(11,0); 
\draw[thick] (7.5,1)--(9,2); 
\draw[thick] (9,2)--(9,4); 
\draw[thick] (10,1.34)--(10.5,1);
\draw[ultra thick,gray,decorate,decoration={coil,aspect=0,segment length=3mm}] (9,2) -- (10,1.34);

\draw[thick] (13,-1)--(13.5,0)--(14,-1); 
\draw[thick] (16,-1)--(16.5,0)--(17,-1); 
\draw[thick] (13.5,0)--(15,1); 
\draw[thick] (15,1)--(15,2); 
\draw[thick] (16,.34)--(16.5,0);
\draw[ultra thick,gray,decorate,decoration={coil,aspect=0,segment length=3mm}] (15,1) -- (16,.34);

\draw[ultra thick,gray,decorate,decoration={coil,aspect=0,segment length=6.3mm}] (9,3) -- (15,2);

\filldraw[blue]
(15,2) circle [radius=3pt]
(9,3) circle [radius=3pt]
(9,4) circle [radius=3pt];

\filldraw[black]
(10,1.34) circle [radius=3pt]

(7,0) circle [radius=3pt]
(8,0) circle [radius=3pt]
(7.5,1) circle [radius=3pt]
(10,0) circle [radius=3pt]
(10.5,1) circle [radius=3pt]
(11,0) circle [radius=3pt]
(9,2) circle [radius=3pt];

\filldraw[black]

(16,.34) circle [radius=3pt]

(13,-1) circle [radius=3pt]
(14,-1) circle [radius=3pt]
(13.5,0) circle [radius=3pt]
(16,-1) circle [radius=3pt]
(16.5,0) circle [radius=3pt]
(17,-1) circle [radius=3pt]
(15,1) circle [radius=3pt];

\node(ell) at (8.7,4.3) {$\ell$};
\node(ell1) at (8.7,3.3) {$\ell_1$};
\node(ell2) at (15.4,2.2) {$\ell_2$};

\node(b3) at (12, -2) {(b)};

\end{tikzpicture}
\end{center}

\caption{ (a) A subdivision of  $\hat{T}_{2,2}$ and  (b) a subdivision of  $\hat{T}_{2,3}$  that is constructed from two copies of subdivisions of  $\hat{T}_{2,2}$.
Each wavy gray edge indicates a subdivided edge.} 

\label{fig-depth-3-tree}
\end{figure}

We begin with the case of binary trees where the construction is easier to visualize.

\begin{thm} 
There exists a subdivision of $\hat{T}_{2,d}$ with treatment number at most 2.
\label{T2d-thm}
\end{thm} 

\begin{proof}
It is straightforward to find a width 2 protocol that clears $\hat{T}_{2,1}$ and for which the stem is treated only in the last time-step.  If we let $H_1 = H_2 = \hat{T}_{2,1}$ and apply the construction in the proof of Theorem~\ref{binary}, we obtain a subdivision of  $\hat{T}_{2,2}$ and a width 2 protocol that clears it for which the stem   is treated only in the last time-step.   Similarly, if we let $H_1 $ and $H_2$ each equal  this new subdivision of  $\hat{T}_{2,2}$ and apply the construction in the proof of Theorem~\ref{binary}, we obtain a subdivision of  $\hat{T}_{2,3}$ and a width 2 protocol that clears it for which the stem   is treated only in the last time-step.  This is illustrated in Figure~\ref{fig-depth-3-tree}.  Continuing by induction we obtain a subdivision of  $\hat{T}_{2,d}$ and a width 2 protocol that clears it for which the stem   is treated only in the last time-step.  
\end{proof}

\begin{cor}
Every binary tree has a subdivision with treatment number at most 2.
\label{binary-tree-cor}
\end{cor}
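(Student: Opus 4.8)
The plan is to reduce the general statement to the complete-binary-tree case already settled in Theorem~\ref{T2d-thm}, using the subgraph monotonicity of the immunization number recorded in Corollary~\ref{subgraph-cor}. First I would observe that any binary tree $T$, rooted appropriately so that every vertex has at most two children, and of depth $d$ (the maximum number of edges on a root-to-leaf path), embeds as a rooted subtree of the complete binary tree $T_{2,d}$, and hence of $\hat{T}_{2,d}$. Concretely, send the root of $T$ to the root of $\hat{T}_{2,d}$ and, proceeding down the levels, map the at-most-two children of each vertex to distinct children of its image. Because every vertex of $T$ has at most two children and $T$ has depth at most $d$, this map is an injective homomorphism, so (up to isomorphism) $T$ is a subgraph of $\hat{T}_{2,d}$ with $E(T) \subseteq E(\hat{T}_{2,d})$ under the embedding.

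Next I would transfer a good subdivision from $\hat{T}_{2,d}$ down to $T$. By Theorem~\ref{T2d-thm}, there is a subdivision $S$ of $\hat{T}_{2,d}$ with $i(S)\le 2$; in $S$, each edge $e$ of $\hat{T}_{2,d}$ has been replaced by an internally disjoint path $P_e$. Let $S'$ be the subgraph of $S$ consisting of all vertices of $T$ together with the paths $P_e$ for $e\in E(T)$. Then $S'$ is precisely a subdivision of $T$, since each edge of $T$ is replaced by the path $P_e$ subdividing the corresponding edge of $\hat{T}_{2,d}$, and by construction $S'$ is a subgraph of $S$.

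Finally, Corollary~\ref{subgraph-cor} gives $i(S') \le i(S) \le 2$, so $S'$ is the required subdivision of $T$ with immunization number at most $2$. The one point to get right is the reduction itself, namely verifying that restricting the particular subdivision produced by Theorem~\ref{T2d-thm} to the edges used by the embedding really yields a \emph{subdivision} of $T$ rather than merely some subgraph; after that, the bound is immediate from subgraph monotonicity. I would not expect any genuine difficulty beyond stating the embedding cleanly. In particular, the construction underlying Theorems~\ref{binary} and~\ref{T2d-thm} subdivides different edges by different amounts, but this is harmless, since a subdivision is permitted to replace distinct edges by paths of distinct lengths.
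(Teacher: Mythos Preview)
Your proposal is correct and follows essentially the same approach as the paper: embed $T$ in $\hat{T}_{2,d}$, take the subdivision $W$ of $\hat{T}_{2,d}$ guaranteed by Theorem~\ref{T2d-thm}, restrict to the paths corresponding to edges of $T$ to obtain a subdivision $T'$ of $T$ that is a subgraph of $W$, and apply subgraph monotonicity (Corollary~\ref{subgraph-cor}). Your version is slightly more explicit about the embedding and about why the restriction really is a subdivision of $T$, but the argument is the same.
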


\begin{proof}
Let $T$ be a binary tree, so $T$ is a subgraph of $\hat{T}_{2,d}$ for some $d$.  By Theorem~\ref{T2d-thm} there exists a graph $W$ that is a subdivision of 
$\hat{T}_{2,d}$  and for which $\tau(W) \le 2$.  We subdivide the corresponding edges of $T$ to obtain a subdivision $T'$ of $T$ that is a subgraph of $W$.  By Theorem~\ref{subgraph-thm} we know $\tau(T') \le i(W)$ and hence $\tau(T') \le 2$.  Thus we have a subdivision of $T$ with treatment number at most 2 as desired.
\end{proof}

We next extend our result to include $m$-ary trees. Table~\ref{j-table} shows the order in which subgraphs $H_k$ and paths $P_k$ are cleared in the proof of Theorem~\ref{m-subt}. Observe that  $\mathcal{J}_4$ is obtained from $\mathcal{J}_3$ by starting with the sequence $H_4\; P_4$ and then inserting $P_4$ after each occurrence of $P_3$. 

\begin{table}[htbp]
\begin{center}
\begin{tabular}{|c|ccccccccccccc|} \hline 
$\mathcal{J}_3$ &  & & $H_3$ & $P_3$ & & $H_2$ & $P_2$ & $P_3$ & & $H_1$ & $P_2$ & $ P_3$ & \\ \hline
$\mathcal{J}_4$ & $H_4$ & $\mathbf P_4$ & $H_3$ & $P_3$ &$\mathbf P_4$ & $H_2$ & $P_2$ & $P_3$ & $\mathbf P_4$& $H_1$ & $P_2$ & $ P_3$ & $\mathbf P_4$\\ \hline

\end{tabular}
\end{center} 
\caption{The order in which portions of graph $H$ are cleared in protocols $\mathcal{J}_3$ and $\mathcal{J}_4$ in the proof of Theorem~\ref{m-subt}.}
\label{j-table} 
\end{table}

\begin{thm} \label{m-subt}
For $1\leq k\leq m$, let $H_k$ be a graph with a leaf $\ell_k$. 
Let $J_k$ be a protocol of width 2 that clears $H_k$ and takes $s_k$ time-steps, and for which   $\ell_k $ is treated only in the last time-step.  Let $s = \max \{s_1, s_2, \ldots, s_m\}$.   Let $H$ be $H_1+H_2+\cdots +H_m$ with a new leaf $\ell$ adjacent to $\ell_1$,  and, for each $k$, $2\leq k\leq m$,  a new path $P_k$ between $\ell_1$ and $\ell_k$ with 
$2^{k-2}s$ new interior vertices. Then there is a protocol of width 2 that clears $H$ and for which $\ell$ is treated only in the last time-step.
\end{thm}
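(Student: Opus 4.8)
The plan is to induct on $m$, constructing the protocol $\mathcal{J}_m$ from $\mathcal{J}_{m-1}$ exactly as described by Table~\ref{j-table} and the recursive rule preceding it: first clear $H_m$ by following $J_m$ (with $\ell_m$ immunized only at its last step) and then sweep the interior of the new path $P_m$ from the $\ell_m$-end toward $\ell_1$; then run $\mathcal{J}_{m-1}$ for $H_1+\cdots+H_{m-1}$ with its paths $P_2,\dots,P_{m-1}$, inserting one extra sweep of $P_m$ immediately after each sweep of $P_{m-1}$. The base case $m=1$ is immediate (clear $H_1$ by $J_1$, then immunize $\ell$ together with $\ell_1$ in one final step), and the case $m=2$ follows the same pattern as Theorem~\ref{binary}. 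Throughout, $\ell_1$ acts as a hub: it is the only vertex shared by $H_1$, the leaf $\ell$, and all of the paths, and it stays red until $H_1$ is cleared in the final block, after which we keep it green by re-immunizing it on every second time-step, exactly as in the proof of Theorem~\ref{binary}.

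The colour dynamics I would track are these. Because $\ell_1$ stays red until the last block, the vertex of each path $P_k$ adjacent to $\ell_1$ is continually re-infected, so along $P_k$ there is a single band of infected (red or yellow) vertices whose frontier advances one vertex per time-step toward $\ell_k$, while the far portion of $P_k$, together with the already-cleared $H_k$ and its leaf $\ell_k$, stays green. Each sweep of $P_k$ pushes this frontier back to the $\ell_1$-end (to the single vertex adjacent to $\ell_1$, which cannot be greened while $\ell_1$ is red), clearing a contaminated segment of length $L$ in at most $L$ time-steps at one vertex per step -- the same shielding sweep used in Theorem~\ref{binary}, in which each newly immunized vertex protects its predecessor from the red vertex ahead; the free second immunizer even lets us clear two per step. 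The invariant to carry is therefore that immediately after the sweep of $P_k$ in any block, the infected band on $P_k$ occupies only the vertex adjacent to $\ell_1$, and all of $H_k\cup P_k$ except that one vertex is green.

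The crux is the quantitative claim that the frontier on $P_k$ never reaches $\ell_k$, and this is precisely what the choice $|P_k|=2^{k-2}s$ guarantees. Between two consecutive sweeps of $P_k$ the protocol executes one block consisting of clearing a single subgraph $H_j$ (at most $s$ steps, since $s=\max_i s_i$) followed by at most one sweep of each shorter path $P_2,\dots,P_{k-1}$. Hence, writing $g_k$ for the maximum time between consecutive sweeps of $P_k$ and bounding a sweep of $P_i$ by $|P_i|$, one gets the recursion
\[
g_k \;\le\; s+\sum_{i=2}^{k-1}|P_i| \;=\; s+\sum_{i=2}^{k-1}2^{i-2}s \;=\; 2^{k-2}s \;=\; |P_k|.
\]
Since the frontier advances at most one vertex per time-step, during any such gap it moves at most $g_k\le|P_k|$ vertices, and clearing two vertices per step where needed keeps this strict, so it never reaches $\ell_k$. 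This is exactly where the geometric growth of the path lengths is forced: each $P_k$ must absorb the cumulative sweeping time of all shorter paths, which doubles at each level. I would fold the bound on $g_k$ into the induction hypothesis, so that when $P_m$ is added the gap it must survive is (block between consecutive $P_{m-1}$-sweeps) plus (one $P_{m-1}$-sweep), which is at most $s+\sum_{i=2}^{m-1}|P_i|=|P_m|$.

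Finally, in the last block we clear $H_1$ by $J_1$ (so $\ell_1$ becomes green), re-immunize $\ell_1$ on alternate steps thereafter, sweep $P_2,\dots,P_m$ once more -- now reaching the vertices adjacent to $\ell_1$ because $\ell_1$ is green -- and immunize $\ell$ (with $\ell_1$) in the single final time-step, yielding width $2$ with $\ell$ immunized only at the end. The main obstacle I anticipate is the bookkeeping needed to confirm that the two-immunizer budget is never exceeded while several independent infected bands advance on the paths at once: at each step we must be doing exactly one of (a) clearing a single $H_j$ via its width-$2$ protocol, or (b) sweeping one path (one immunizer, optionally two), with re-immunization of $\ell_1$ in the final block never competing for more than the two available immunizers. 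A secondary technical point is verifying the off-by-one timing of the frontier and of the $\ell_1$ transition (as in Theorem~\ref{binary}, where the neighbour of $\ell_1$ turns yellow one step after $\ell_1$ turns red), which is what makes the inequality $g_k\le|P_k|$ safe rather than merely borderline.
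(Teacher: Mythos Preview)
Your plan mirrors the paper's proof closely: induct on $m$, build $\mathcal{J}_m$ by prepending $H_m,P_m$ to $\mathcal{J}_{m-1}$ and inserting a $P_m$-sweep after each $P_{m-1}$-sweep, and then bound the infection frontier on $P_m$ by (infected on $P_{m-1}$) $+$ (time to sweep $P_{m-1}$) $\le 2|P_{m-1}|=|P_m|$. That part is fine and is exactly the paper's argument.

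The gap is in how you handle the \emph{shorter} paths once the longer sweeps have been inserted. Your structural claim ``between two consecutive sweeps of $P_k$ the protocol executes one block consisting of clearing a single $H_j$ followed by at most one sweep of each shorter path $P_2,\dots,P_{k-1}$'' is false in $\mathcal{J}_m$ for $k<m$: look at $\mathcal{J}_4$ in Table~\ref{j-table}, where between the two $P_2$-sweeps you find $P_3,\,P_4,\,H_1$. Combined with your assumption that $\ell_1$ stays red until $H_1$ is cleared, this is fatal: the frontier on $P_2$ then advances throughout the inserted $P_3$- and $P_4$-sweeps as well as during $H_1$, easily overrunning $|P_2|=s$. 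More generally, your recursion $g_k\le s+\sum_{i=2}^{k-1}|P_i|$ holds only inside $\mathcal{J}_k$; when you pass to $\mathcal{J}_m$ the actual wall-clock gap for $P_k$ grows with $m$, and your induction does not close.

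The missing ingredient is the one the paper states explicitly: during each inserted $P_m$-sweep one must \emph{immunize $\ell_1$ every other step to keep $\ell_1$ from becoming contagious}. The point is that the inserted $P_m$-sweep occurs immediately after a $P_{m-1}$-sweep, at which moment $P_{m-1}$ (and, recursively, every already-swept shorter path) is entirely green and $\ell_1$ is green; keeping $\ell_1$ non-red throughout the inserted sweep therefore leaves all those shorter paths untouched. This is precisely why the inductive hypothesis for $\mathcal{J}_{m-1}$ survives the insertion: the extra $P_m$-sweeps are invisible to $P_2,\dots,P_{m-1}$. Your version, with $\ell_1$ red throughout and sweeps stopping one vertex short of $\ell_1$, does not have this property, so you would need either to adopt the paper's $\ell_1$-maintenance or to supply a different argument for why the shorter paths remain safe in $\mathcal{J}_m$.
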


\begin{proof}
We proceed by induction on $m$. By the proof of Theorem~\ref{binary}, we have a protocol for $m=2$ and $s = \max\{s_1,s_2\}$ in which  $\ell$ is treated only in the last time-step, and using $J_1$ and $J_2$, each of $H_1$ and $H_2$ is cleared 
exactly once,  $P_2$ is cleared before $H_1$ and also after each time $P_1$ is cleared.  This is the base case.  By induction, assume that $m\geq 3$ and $\mathcal{J}(m-1)$ is a protocol for $m-1$ subgraphs, where $\ell$ is treated only in the last time-step, each subgraph $H_i$ is cleared 
exactly once, in order from $H_{m-1}$ to $H_1$, and the paths $P_i$  are cleared as in the proof of Theorem~\ref{binary}. Our plan is to insert subsequences into $\mathcal{J}(m-1)$ to create a protocol for $H$. First, we start by following $J_m$ in time at 
most $s$. Then we clear the interior vertices of $P_m$ and $\ell_1$, from $\ell_m$ towards $\ell_1$, in time at most $s\cdot 2^{m-2}$.  

Then we do $\mathcal{J}(m-1)$ with these additions: each time that we finish clearing vertices on $P_{m-1}$, we insert a sequence that clears the non-green interior vertices of $P_m$, and treats $\ell_1$ every other time to keep $\ell_1$ green.

By induction, the vertices of $H_{m-1}$ do not become yellow again after we do the $J_{m-1}$ protocol. However, we do have to clear the interior vertices repeatedly, because the color change to yellow and then red 
spreads from $\ell_1$. Each time we clear $P_{m-1}$ in $\mathcal{J}(m-1)$, the color change 
has not reached $\ell_{m-1}$ which is in $H_{m-1}$. Hence, the time since $P_{m-1}$ was last cleared is less than or equal to $s\cdot 2^{m-3}$, which is the number of interior vertices of $P_{m-1}$.  By construction, the interior vertices of $P_{m-1}$ and $P_m$ start to change color at the same time. The source of the color change
is $\ell_1$. Thus, the number of non-green interior vertices of $P_m$ is the number of non-green vertices on $P_{m-1}$ plus the number of time-steps to clear all interior vertices of $P_{m-1}$. The total is at most $2\cdot s\cdot 2^{m-3}=s\cdot 2^{m-2}$. Hence $\ell_m$ does not change color before the clearing of $P_m$ is completed, and $H_m$ does not need to be cleared again.  This completes the proof that the new protocol clears $H$, including treating the vertex $\ell$  only in the last round.
\end{proof}

\begin{thm} \label{time-bound}
The protocol constructed in the proof of Theorem~\ref{m-subt}
 takes at most $s((m+3)2^{m-2}-1)$ time-steps.
\end{thm}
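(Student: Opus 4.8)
The plan is to let $T(m)$ denote the number of time-steps used by the protocol $\mathcal{J}(m)$ built in the proof of Theorem~\ref{m-subt} (with $s=\max_k s_k$ held fixed throughout), and to prove by induction on $m$ that $T(m)\le\big((m+3)2^{m-2}-1\big)s$, which is exactly the claimed bound. For the base case $m=2$ I would simply read off the count from the proof of Theorem~\ref{binary}: clearing $H_2$, the initial sweep over $P_2$ and $\ell_1$, clearing $H_1$, the clean-up sweep over $P_2$, and the final immunization of $\ell$ together use at most $s_2+3s_1\le 4s=\big((2+3)2^{0}-1\big)s$ time-steps.

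For the inductive step I would split the running time of $\mathcal{J}(m)$ into four blocks taken directly from the construction: (i) following $J_m$ to clear $H_m$, costing at most $s$; (ii) the initial sweep clearing the interior of $P_m$ together with $\ell_1$, costing at most $2^{m-2}s$; (iii) the embedded copy of $\mathcal{J}(m-1)$ (with its single final immunization of $\ell$ relocated to the very end of $\mathcal{J}(m)$, a move that does not change the count), costing $T(m-1)$; and (iv) the $P_m$ clean-up sequences inserted after each clearing of $P_{m-1}$. Since the embedded $\mathcal{J}(m-1)$ uses the same parameter $s$, the induction hypothesis gives $T(m-1)\le\big((m+2)2^{m-3}-1\big)s$ for block (iii). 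The key claim is that block (iv), the total length of all inserted clean-ups, is also at most $T(m-1)$. Granting this, I obtain the recurrence $T(m)\le 2T(m-1)+(2^{m-2}+1)s$, and the one-line computation $2\big((m+2)2^{m-3}-1\big)s+(2^{m-2}+1)s=\big((m+3)2^{m-2}-1\big)s$ closes the induction.

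To bound block (iv) I would use that reinfection on $P_m$ travels inward from $\ell_1$ at a rate of at most one vertex per time-step. Immediately after any sweep of $P_m$ the whole path is green, so if $t$ time-steps of $\mathcal{J}(m-1)$ elapse before the next clearing of $P_{m-1}$, then at most $t$ interior vertices of $P_m$ have become infected. Because we have two immunizers---one devoted to keeping $\ell_1$ green and one sweeping the infected segment from its leading ($\ell_2$-side) end back toward $\ell_1$---each such clean-up clears its reinfected vertices in at most as many time-steps as have elapsed since the previous sweep, namely at most $t$. The segments of $\mathcal{J}(m-1)$ lying between consecutive clearings of $P_{m-1}$ are pairwise disjoint and exhaust $\mathcal{J}(m-1)$, since its last operation is a clearing of $P_{m-1}$; summing these local bounds over the $m-1$ insertions therefore gives a total of at most $T(m-1)$, as required.

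I expect the main obstacle to be exactly this bound on block (iv): one must argue carefully that each inserted clean-up finishes within the budget supplied by its preceding gap, even though $\ell_1$ keeps feeding new infection into $P_m$ during the clean-up itself. This is where having two immunizers is essential, as it lets the sweep outrun the advancing infection front while simultaneously pinning $\ell_1$ green---precisely the behaviour seen in the explicit subdivided-grid computation, where a front that has reached $w_{29}$ is nonetheless cleared at a rate faster than one vertex per step. Once this insertion bound is in hand, the recurrence solution and the base-case verification are routine.
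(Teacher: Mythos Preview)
Your argument is correct and, in fact, a bit more streamlined than the paper's. Both proofs rest on the same mechanism---the time to clean up $P_m$ after a segment of $\mathcal{J}(m-1)$ is bounded by the length of that segment, since reinfection along $P_m$ advances at most one vertex per step---but they organize the bookkeeping differently. The paper tracks the increment $f(k)=T(k)-T(k-1)$, computes the cost of each individual clearing of $P_k$ explicitly (each later clean-up of $P_k$ costing twice the immediately preceding clean-up of $P_{k-1}$), obtains the recurrence $f(k)=2f(k-1)+s\cdot 2^{k-3}$, and then solves for $\sum_{k}f(k)$ via generating functions and partial fractions. You bypass the per-clearing analysis entirely: by partitioning $\mathcal{J}(m-1)$ into the segments ending at each $P_{m-1}$-clearing and bounding every inserted $P_m$ clean-up by the length of its preceding segment, you get the aggregate estimate $(\text{block (iv)})\le T(m-1)$ in one stroke, land directly on the recurrence $T(m)\le 2T(m-1)+(2^{m-2}+1)s$, and close it with a one-line induction. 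The paper's route yields finer information (the exact cost $f(k)$ at each level); yours is shorter and avoids generating functions altogether. Summing the paper's recurrence for $f(k)$ from $k=3$ to $m$ in fact reproduces exactly your $T$-recurrence, so the two arguments are equivalent at the level of the final inequality.

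One small point worth tightening: the sentence ``its last operation is a clearing of $P_{m-1}$'' is true only after you have relocated the final $\ell$-immunization to the end of $\mathcal{J}(m)$, which you do note earlier; making that relocation explicit at the moment you invoke the partition would remove any ambiguity. Either way the discrepancy is a single time-step and does not affect the bound.
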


\begin{proof}
We compute the number of time-steps to clear $H$ in the proof of Theorem~\ref{m-subt}, that is, the number of time-steps in $\mathcal{J}(m)$. 
Table~\ref{j-table}  illustrates the relationship between $\mathcal{J}(3)$ and $\mathcal{J}(4)$.  In $\mathcal{J}(m)$,
it takes $4s$ time-steps to clear $H_2, P_2, H_1$ and $P_2$ a second time. Recursively, for $3\leq k\leq m$, $P_k$ is cleared one more time than $P_{k-1}$. Since $P_2$ is cleared twice, by induction $P_k$ is cleared $k$ times for $2\leq k\leq m$. Each time we clear $P_k$, we need only treat its compromised vertices. The number of time-steps of $\mathcal{J}(k)$ equals the number of time-steps of $
\mathcal{J}(k-1)$ plus the additional  time to clear 
$H_k$ and the compromised vertices of $P_k$ in $k$ different clearings. For $3\leq  k\leq m$, since the length of $P_k$ is $s\cdot 2^{k-2}$,  it takes $s\cdot 2^{k-2}$ time-steps to clear $P_k$ the first time in the protocol. The second time that $P_k$ is cleared, the number of compromised vertices on $P_k$ is  the number of time-steps it takes to clear $H_{k-1}$ and $P_{k-1}$, which is $s + s\cdot 2^{k-3}$. For the third and subsequent clearings, the 
number of time-steps it takes to clear $P_k$ is twice the number of time-steps it takes to clear $P_{k-1}$ immediately before $P_k$. Note that by Theorem~\ref{binary}, the protocol $\mathcal{J}(2)$ is completed in $4s$ time-steps.
Define $f(k)$ as follows: let $f(0)=0, f(1)=s$ and $f(2)=3s$, and for $3\leq k\leq m$, let $f(k)$ be the number of time-steps of 
$\mathcal{J}(k)$ minus the number of time-steps of $\mathcal{J}(k-1)$. Thus $f(k)$ is the number of time-steps in protocol  $\mathcal{J}(k)$  to clear $H_k$ once, and $P_k$  $k$ times.  
The initial values are chosen so that $f(0)+f(1)+f(2)$ is the number of time-steps in protocol $\mathcal{J}(2)$.  By telescoping, the total time of $\mathcal{J}(m)$ is $ \sum_{i=0}^m f(m)$. 
For $3\leq k\leq m$, define $g(k)$ to be the number  of time-steps to clear $P_k$ in all but the first clearing.  Thus $f(k) - g(k)$ is the number of time-steps to clear $H_k$ and the first clearing of $P_k$ in $\mathcal{J}(k)$, so we get   $f(k) - g(k) = s+s\cdot 2^{k-2}$.  Substituting $k-1$ for $k$ and rearranging, we get $g(k-1) = f(k-1) - s - s\cdot 2^{k-3}$.   By the proof of Theorem~\ref{m-subt}, the number of time-steps to clear $P_k$ for the $i$th time, where $3 \le i \le k$, is twice as large as the number of time-steps to clear $P_{k-1}$ for the $(i-1)$st time.
Therefore,  $f(k) - 2g(k-1)$  is the number of time steps in  $\mathcal{J}(k)$ to clear $H_k$ (once), and $P_k$ the first and second times. Thus $f(k) - 2g(k-1) = s+s\cdot 2^{k-2}+(s+s\cdot 2^{k-3})$.   Now substituting in for $g(k-1)$ from above we get  
 $f(k)=s\cdot 2^{k-3} + 2 f(k-1)$, for $k\geq 3$.

We now use generating functions to complete the computation of the number of time-steps of $\mathcal{J}(m)$. Let  $F(x) = \sum_{i=0}^\infty f(i)x^i$. Using the recurrence, we have $F(x) -s x - 3s x^2= \frac{sx^3}{(1-2x)}+ 2xF(x) -2sx^2 $, so 
$F(x)= \frac{sx^3}{(1-2x)^2}+ \frac{sx+sx^2}{1-2x}=\frac{sx(1-x-x^2)}{(1-2x)^2}$. 
The total time of $\mathcal{J}(m)$ is $\sum_{i=0}^m f(m)$. Its generating function is $F(x)/(1-x) =\frac{sx(1-x-x^2)}{(1-x)(1-2x)^2}$.  Using partial fractions, the $m$th coefficient of $\frac{F(x)}{1-x}$ is $c_1 + c_2 2^m+ c_3 (m+1) 2^m$ for some constants $c_1, c_2, c_3$. Using the initial values, the total time of $\mathcal{J}(m)$ is  $s((m+3)2^{m-2}-1).$ 
\end{proof}

We use the construction in Theorem~\ref{m-subt} to prove a a generalization of Theorem~\ref{T2d-thm}.

\begin{thm} 
There exists a subdivision of $\hat{T}_{m,d}$ with treatment number at most 2.
\label{Tmd-thm}
\end{thm} 

\begin{proof}
The proof is analogous to that of Theorem~\ref{T2d-thm}, using Theorem~\ref{m-subt} in place of Theorem~\ref{binary}.
\end{proof}

We now present our main result of this section.

\begin{cor}\label{cor:tree} For any tree $T$, there is a subdivision of $T$  with treatment number at most 2.
\end{cor}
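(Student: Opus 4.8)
The plan is to reduce the general tree $T$ to the complete $m$-ary tree $\hat{T}_{m,d}$, for which Theorem~\ref{Tmd-thm} already provides a subdivision with immunization number at most $2$. First I would observe that every tree $T$ can be embedded as a subgraph of a sufficiently large complete $m$-ary tree with an added stem. Concretely, let $m$ be the maximum degree of $T$ (so that each vertex of $T$ has at most $m$ children once we root $T$ at any vertex), and let $d$ be the depth of this rooted tree. Then $T$ is isomorphic to a subgraph of $\hat{T}_{m,d}$: root $T$ arbitrarily, map its root to the root of $T_{m,d}$, and embed the children of each node into distinct children available in the complete $m$-ary tree, which is always possible since no node of $T$ has more than $m$ children and the complete tree has depth at least that of $T$. (The extra stem of $\hat{T}_{m,d}$ is harmless, as we simply do not use it.)

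Next I would invoke Theorem~\ref{Tmd-thm} to obtain a subdivision $W$ of $\hat{T}_{m,d}$ with $i(W) \le 2$. The key idea is then the same as in Corollary~\ref{binary-tree-cor}: a subdivision of $\hat{T}_{m,d}$ replaces each edge by a path, and since $T$ sits inside $\hat{T}_{m,d}$ as a subgraph, I subdivide the corresponding edges of $T$ by the same amounts to produce a subdivision $T'$ of $T$. This $T'$ is then a subgraph of $W$: each edge of $T$ corresponds to an edge of $\hat{T}_{m,d}$, which $W$ replaces by a path of some length, and we subdivide the matching edge of $T$ into a path of exactly that length.

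Finally I would apply the subgraph monotonicity result, Theorem~\ref{subgraph-thm} (equivalently Corollary~\ref{subgraph-cor}), to conclude $i(T') \le i(W) \le 2$. This gives a subdivision $T'$ of $T$ with immunization number at most $2$, as required. I would be careful to note that $i(T') \ge 1$ trivially since $T'$ contains an edge, so the bound is $i(T') \in \{1,2\}$.

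The main obstacle I anticipate is purely bookkeeping rather than conceptual: verifying that $T$ genuinely embeds into $\hat{T}_{m,d}$ as a \emph{subgraph} (not merely a topological minor) so that the phrase ``subdivide the corresponding edges'' is well-defined, and that the subdivision amounts inherited from $W$ can be matched edge-for-edge. Since subdivision commutes well with the subgraph relation --- subdividing an edge of a graph and the same edge of a subgraph preserves the subgraph relation --- this step is routine, and the entire argument mirrors Corollary~\ref{binary-tree-cor} almost verbatim, with $\hat{T}_{m,d}$ playing the role that $\hat{T}_{2,d}$ played there.
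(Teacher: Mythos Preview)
Your proposal is correct and follows essentially the same approach as the paper: embed $T$ as a subgraph of $\hat{T}_{m,d}$, invoke Theorem~\ref{Tmd-thm} to obtain a subdivision $W$ with $i(W)\le 2$, subdivide the corresponding edges of $T$ to obtain $T'\subseteq W$, and conclude via Corollary~\ref{subgraph-cor}. The paper's proof is terser---it simply asserts $T$ is a subgraph of some $\hat{T}_{m,d}$ and refers back to the proof of Corollary~\ref{binary-tree-cor}---but your added detail on choosing $m$ and $d$ and on the edge-by-edge correspondence is sound and welcome.
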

\begin{proof} 
Let $T$ be a tree, so $T$ is a subgraph of $\hat{T}_{m,d}$ for some  $m$ and $d$.  By Theorem~\ref{Tmd-thm} there exists a graph $W$ that is a subdivision of 
$\hat{T}_{m,d}$  and for which $\tau(W) \le 2$.  Analogous to the proof of Corollary~\ref{binary-tree-cor},
there exists a subdivision $T'$ of $T$ that is also a subgraph of $W$, and hence  $\tau(T') \le i(W) \le 2$ as desired.
\end{proof}

To conclude, we note the following connections to the inspection number of a graph, defined in Section~\ref{sec:intro}. The inspection number equals the treatment number when $r=1$ and $s=0$.  Bernshteyn and Lee~\cite{BL21} have studied subdivisions for the inspection number.  They showed that the inspection number can both increase or decrease when a graph is changed by taking a subdivision, and prioritized the characterization of graphs for which there exists a subdivision that results in a graph with inspection number at most three.

\section{Questions}

\label{sec:ques} 

We conclude with some open questions.

\begin{ques} 
In Theorem~\ref{time-bound}, we gave a bound for the number of time-steps needed to clear the graph $H_k$. 
What is the smallest number of time-steps needed to clear $H_k$? 
\end{ques}

For the case where $r=s=1$, Theorem~\ref{one-caterpillar} shows  that  there is a protocol of width 1 that clears graph $H$ (i.e., $\tau(H)=1$) if and only if $H$ is a caterpillar.  We ask more generally about the existence of width 1 protocols.

\begin{ques} For $r \geq 1, s \geq 1$, can we characterize those graphs $H$ for which a protocol of width 1 clears $H$? For general $r$ and $s$, can we characterize graphs for which $\tau_{r,s}(H)=1$?
\end{ques} 

More generally, for a graph $H$, we ask about the distinction between $\tau_{r,s}(H)$ and $\tau_{s,r}(H)$ when $r$ and $s$ are not both 1.  

\begin{ques} Let $H$ be a graph.  Is it true that $\tau_{r,s}(H) = \tau_{s,r}(H)$ for all $r$ and $s$?
\end{ques}

In Section~\ref{sec:subdivisions},  Corollary~\ref{cor:tree} shows that for a tree $T$ and $r=s=1$, there exists a subdivision of $T$ that results in a graph with treatment number at most 2.  Suppose that $T$ is a tree, but not a caterpillar.  By Theorem~\ref{one-caterpillar}, we know $\tau(T) \geq 2$. 

\begin{ques} What is the smallest tree $T$ for which $\tau(T)=3$? (Here, smallest could refer to either the minimum cardinality of a vertex set or the diameter.)  \end{ques}

More generally, we ask about subdivisions of general graphs.

\begin{ques} Let $H$ be a graph and fix $k \in \mathbb{N}$. What are the smallest values of $r$ and $s$ for which there is a subdivision $H'$ of $H$  where $\tau_{r,s}(H') \leq k$?\end{ques}

The treatment number $\tau(H)$ has some connections to a cops and robbers game called the \emph{one-proximity game.}  Bonato et.\ al.\ \cite{one-vis} study the \emph{one-proximity number} of a graph $H$, denoted by $\prox_1(H)$, which is the number of cops needed to guarantee a win for the cops when the game is played on a graph $H$.  It is straightforward to show that any treatment protocol $(A_1, A_2, \ldots, A_N)$ that clears a graph $H$ immediately  leads to a solution to the one-proximity game on $H$. This can be accomplished by positioning the cops on the vertices in $A_j$ in round $j$ for  $1 \le j \le N$.    It follows that $ 1 \le \prox_1(H) \le \tau(H)$ for all graphs $H$.  However, these quantities can be arbitrarily far apart, for example, the complete graph $K_n$ has $\prox_1(K_n) =1$ while $\tau(K_n) = \lceil \frac{n}{2} \rceil$.

\begin{ques} If $\tau(H) = 1$ then $\prox_1(H) = 1$ and as shown in Theorem~\ref{one-caterpillar}, this class of graphs is exactly the caterpillars. For which additional graphs $H$ does 
$\tau(H) = \prox_1(H)$?

\end{ques}

\section{Acknowledgements}

Our work on the discrete-time treatment model was inspired by an open problem regarding the inspection game, presented by A. Bernshteyn at the Graph Theory Workshop at Dawson College (Qu\'{e}bec) in June 2023.
 N.E.~Clarke acknowledges research support from NSERC (2020-06528). M.E. Messinger acknowledges research support from NSERC (grant application 2018-04059).  The authors thank Douglas B. West for his valuable insights during the early stages of this project.

\end{document}